\DeclareMathAlphabet{\mathpzc}{OT1}{pzc}{m}{it}
\newlength{\multiindent}
\algnewcommand{\LineCommentCont}[1]{\Statex \hskip\ALG@thistlm%
  \parbox[t]{\dimexpr\linewidth-\ALG@thistlm}
{\leftskip=\multiindent
  \hangindent=\multiindent
  \hangafter=1
  \strut\makebox[\multiindent][c]{}#1\strut}
  } 
\newcommand{\R}{\mathbb{R}}                                     
\newcommand{\pd}[2]{\frac{\partial#1}{\partial#2}}              
\newcommand\xqed[1]{\leavevmode\unskip\penalty9999 \hbox{}\nobreak\hfill \quad\hbox{#1}}
\newcommand{\exampleSymbol}{\xqed{$\triangle$}}
\DeclareMathOperator{\diag}{diag}
\newtheorem{theorem}{Theorem}[section]
\newtheorem*{theorem*}{Theorem}
\newtheorem{definition}[theorem]{Definition}
\theoremstyle{definition}
\newtheorem{example}[theorem]{Example}
\newtheorem{remark}[theorem]{Remark}
\renewcommand*\env@matrix[1][*\c@MaxMatrixCols c]{%
  \hskip -\arraycolsep
  \let\@ifnextchar\new@ifnextchar
  \array{#1}}
\title{Nearest-Neighbor Interaction Systems in the Tensor-Train Format}
\author[1]{Patrick Gel\ss}
\author[1]{Stefan Klus}
\author[1]{Sebastian Matera}
\author[1,2]{Christof Sch\"utte}
\affil[1]{Department of Mathematics and Computer Science, Freie Universit\"at Berlin, Germany}
\affil[2]{Zuse Institute Berlin, Germany}
\date{}
\begin{document}
\maketitle

\begin{abstract}
Low-rank tensor approximation approaches have become an important tool in the scientific computing community. The aim is to enable the simulation and analysis of high-dimensional problems which cannot be solved using conventional methods anymore due to the so-called curse of dimensionality. This requires techniques to handle linear operators defined on extremely large state spaces and to solve the resulting systems of linear equations or eigenvalue problems. In this paper, we present a systematic tensor-train decomposition for nearest-neighbor interaction systems which is applicable to a host of different problems. With the aid of this decomposition, it is possible to reduce the memory consumption as well as the computational costs significantly. Furthermore, it can be shown that in some cases the rank of the tensor decomposition does not depend on the network size. The format is thus feasible even for high-dimensional systems. We will illustrate the results with several guiding examples such as the Ising model, a system of coupled oscillators, and a CO oxidation model.
\end{abstract}

\section{Introduction}
\label{sec:Introduction}

Over the last years, the interest in low-rank tensor decompositions has been growing rapidly and several different tensor formats such as the canonical format, the Tucker format, and the TT format have been proposed. It was shown that tensor-based methods can be successfully applied to many different application areas, e.g.~quantum physics~\cite{WHITE, MEYER}, chemical reaction dynamics~\cite{JAHNKE, KAZEEV, DOLGOV01, KS15, GELSS2016}, stochastic queuing problems~\cite{BUCHHOLZ, KRESSNER}, machine learning~\cite{BEYLKIN2009, NOVIKOV2015, COHEN2015}, and high-dimensional data analysis~\cite{KLUS}. The applications typically require solving systems of linear equations, eigenvalue problems, ordinary differential equations, partial differential equations, or completion problems. One of the most promising tensor formats for these problems is the so-called \emph{tensor-train format} (TT format) \cite{OSELEDETS01, OSELEDETS02, OSELEDETS03}, a special case of the \emph{hierarchical Tucker Format} \cite{HK09, Gra10, ARNOLD, LUBICH01}. In this paper, we will consider in particular high-dimensional interaction networks described by a \emph{Markovian master equation} (MME). The goal is to derive systematic TT decompositions of high-dimensional tensors for interaction networks that are only based on nearest-neighbor interactions. In this way, we want to simplify the construction of tensor-train representations, which is one of the most challenging tasks in the tensor-based simulation of interaction networks. The resulting TT decomposition can be easily scaled to describe different state space sizes, e.g.~number of reaction sites or number of species. The complexity of a tensor train is determined by the TT ranks. Not only the memory consumption of the tensor operators is affected by the ranks, but also the costs of standard operations such as the calculation of norms and the runtimes of tensor-based solvers.

Many applications require solving high-dimensional systems of linear equations of the form $\mathbf{A} \cdot \mathbf{T} = \mathbf{U}$, where $\mathbf{A}$ is a linear TT operator and $\mathbf{T}$ and $\mathbf{U}$ are tensors in the TT format. The efficiency of algorithms proposed so far for solving such systems such as ALS, MALS, or AMEn~\cite{HOLTZ01, DOLGOV03, DOLGOV01} depends strongly on the TT ranks of the operator. As a result, it is vitally important to be able to generate low-rank representations of $ \mathbf{A} $. This can be achieved by truncating the operator, neglecting singular values that are smaller than a given threshold $ \varepsilon $, or by exploiting inherent properties of the problem. Our goal is to derive a low-rank decomposition which represents the operator $ \mathbf{A} $ associated with nearest-neighbor interaction networks exactly, without requiring truncation.

Finding a TT decomposition of a given tensor is in general cumbersome, in particular if the number of cells is not determined a priori. In \cite{GELSS2016}, we derived a systematic decomposition for a specific reaction system and it turned out that the underlying idea can be generalized to describe a larger class of interaction systems. The only assumption we make is that the system comprises only nearest-neighbor interactions. The number and types of the cells as well as the interactions between these cells may differ. Moreover, systems with a cyclic network structure can be represented using the proposed TT decomposition. One of the main advantages of the presented decomposition is that the TT ranks of homogeneous systems do not depend on the number of cells of the network. 

The paper is organized as follows: In Section~\ref{sec: theory}, we give a brief overview of the TT format and define a special core notation. Furthermore, nearest-neighbor interaction systems defined on a set of cells and Markovian master equations are introduced. In Section~\ref{sec: SLIMTT}, a specific TT decomposition is derived exploiting properties of nearest-neighbor interaction systems. In Section~\ref{sec: SLIM Markov}, we use this TT decomposition for Markovian master equations in the TT format and present numerical results. Section \ref{sec: concl} concludes with a brief summary and a future outlook.

\section{Theoretical Background}
\label{sec: theory}

In this section, we will introduce the concept of tensors and different tensor decompositions, namely the canonical format and the TT format. Furthermore, we will define interaction systems that are based only on nearest-neighbor couplings. We will distinguish between homogeneous and heterogeneous systems as well as between cyclic and non-cyclic systems.

\subsection{Tensor Formats}\label{subsec: tensor formats}

Tensors, in our sense, are simply multidimensional generalizations of vectors and matrices. A tensor in full format is given by a multidimensional array of the form $ \mathbf{T} \in \R^{n_1 \times \dots \times n_d} $ and a linear operator by $ \mathbf{A} \in \R^{(n_1 \times n_1) \otimes \dots \otimes (n_d \times n_d)} $. The entries of a tensor are indexed by $ \mathbf{T}_{x_1, \dots, x_d} $ and the entries of operators by $ \mathbf{A}_{x_1, y_1 \dots, x_d, y_d} $. In order to mitigate the curse of dimensionality, that is, the exponential growth of the memory consumption in $ d $, various tensor formats have been proposed over the last years. Here, we will focus on the TT format. The common basis of various tensor formats is the tensor product which enables the decomposition of high-dimensional tensors into several smaller tensors. 

\begin{definition} \label{def:outer product}
    The \emph{tensor product} of two tensors $\mathbf{T} \in \mathbb{R}^{m_1 \times \dots \times m_d}$ and $\mathbf{U} \in \mathbb{R}^{n_1 \times \dots \times n_e}$ defines a tensor $\mathbf{T} \otimes \mathbf{U} \in \R^{(m_1 \times \dots \times m_d) \times (n_1 \times \dots \times n_d)}$ with
    \begin{equation*}
        \left(\mathbf{T} \otimes \mathbf{U} \right)_{x_1, \dots, x_d, y_1, \dots, y_e } = \mathbf{T}_{x_1, \dots, x_d} \cdot \mathbf{U}_{y_1, \dots, y_e},
    \end{equation*}
    where $1 \leq x_k \leq m_k$ for $k = 1, \dots, d$ and $1 \leq y_k \leq n_k$ for $k = 1, \dots, e$.
\end{definition}

The tensor product is a bilinear map. That is, if we fix one of the tensors we obtain a linear map on the space where the other tensor lives (see Appendix~\ref{app: Tensor Notation and Properties}). The initial concept of tensor decompositions was introduced in 1927 by Hitchcock \cite{HITCHCOCK}, who presented the idea of expressing a tensor as the sum of a finite number of \emph{rank-one tensors} (or \emph{elementary tensors}).

\begin{definition} 
A tensor $\mathbf{T} \in \mathbb{R}^{n_1 \times \dots \times n_d}$ is said to be in the \emph{canonical format} if
\begin{equation} \label{cf_simple_tensor}
    \mathbf{T} =  \sum_{k=1}^r \left( \mathbf{T}^{(1)} \right)_{k,:} \otimes \dots \otimes \left( \mathbf{T}^{(d)} \right)_{k,:},
\end{equation}
with \emph{cores} $\mathbf{T}^{(i)} \in \mathbb{R}^{r \times n_i}$ for $i=1, \dots, d$. The parameter $r$ is called the \emph{rank} of the decomposition.
\end{definition}

\begin{remark}\label{re: cyclic permutation}
Given a canonical tensor $\mathbf{T}$ as defined in \eqref{cf_simple_tensor}, a cyclic permutation of the cores yields a tensor whose indices are permuted correspondingly. That is, if we define
\begin{equation*}
    \tilde{\mathbf{T}} = \sum_{k=1}^r \left( \mathbf{T}^{(m)} \right)_{k,:} \otimes \dots \otimes \left( \mathbf{T}^{(d)} \right)_{k,:} \otimes \left( \mathbf{T}^{(1)} \right)_{k,:} \otimes \dots \otimes \left( \mathbf{T}^{(m-1)} \right)_{k,:},
\end{equation*}
with $1 \leq m \leq d$, we obtain
\begin{equation*}
    \tilde{\mathbf{T}}_{x_m, \dots, x_d, x_1, \dots, x_{m-1}} = \mathbf{T}_{x_1, \dots, x_{m-1}, x_m, \dots,  x_d}.
\end{equation*}
An analogous statement even holds for arbitrary permutations of the cores. However, we only consider cyclic permutations in order to derive tensor representations for the considered interaction systems.
\end{remark}

In fact, any tensor can be represented by a linear combination of elementary tensors as in \eqref{cf_simple_tensor}. However, the number of required rank-one tensors plays an important role. Although the canonical format can theoretically be used for low-parametric decompositions of high-dimensional tensors~\cite{CARROLL1970, SIDIROPOULOS2000, LATHAUWER2007}, it has a crucial drawback: Since canonical tensors with bounded rank $r$ do not form a manifold, optimization problems can be ill-posed~\cite{SILVA}, with the result that the best approximation may not even exist. For more information about canonical tensors, we refer to~\cite{KOLDA}.

We will use the canonical format for the derivation of TT decompositions for systems based on nearest-neighbor interactions. For the actual computations, however, we will rely on the TT format. The TT format, which was developed by Oseledets and Tyrtyshnikov in 2009, see~\cite{OSELEDETS01, OSELEDETS02}, is a promising candidate for numerical computations due to its stability from an algorithmic point of view and reasonable computational costs.

\begin{definition}
A tensor $\mathbf{T} \in \mathbb{R}^{n_1 \times \dots \times n_d}$ is said to be in the \emph{TT format} if
\begin{equation*} 
    \mathbf{T} = \sum_{k_0=1}^{r_0} \cdots  \sum_{k_d=1}^{r_d} \left( \mathbf{T}^{(1)} \right)_{k_0,:,k_1} \otimes \dots \otimes \left( \mathbf{T}^{(d)} \right)_{k_{d-1},:,k_d},
\end{equation*}
where the $\mathbf{T}^{(i)} \in \mathbb{R}^{r_{i-1} \times n_i \times r_i}$, $i=1, \dots, d$, are called \emph{TT cores} and the numbers $r_i$ \emph{TT ranks} of the tensor. Here, $ r_0 = r_d = 1 $.
\end{definition}

It is important to note that the TT ranks determine the numerical complexity. The lower the ranks, the lower the memory consumption and the computational costs. However, high ranks might be required to represent the state of the network or the solution of a system of linear equations accurately. Nevertheless, it is advantageous to use the TT format since the TT ranks often exhibit a better behavior than the ranks of canonical decompositions, especially for increasing system sizes. In general, the TT ranks are bounded by the canonical rank when expressing the same tensor in both formats, see \cite{OSELEDETS2010}.

The TT decomposition presented in this paper can also be used to express linear operators $ \mathbf{A} $ in the TT format. We assume that these operators are generalizations of square matrices, i.e.~$\mathbf{A} \in \R^{(n_1 \times n_1) \times  \dots \times (n_d \times n_d)}$.

\begin{definition}
A linear operator $\mathbf{A} \in \R^{(n_1 \times n_1) \times \dots \times (n_d \times n_d)}$ is said to be in the \emph{TT format} if
\begin{equation*} 
    \mathbf{A} = \sum_{k_0=1}^{r_0} \cdots  \sum_{k_d=1}^{r_d} \left( \mathbf{A}^{(1)} \right)_{k_0,:,:,k_1} \otimes \dots \otimes \left( \mathbf{A}^{(d)} \right)_{k_{d-1},:,:,k_d},
\end{equation*}
with \emph{TT cores} $\mathbf{A}^{(i)} \in \mathbb{R}^{r_{i-1} \times n_i \times n_i \times r_i}$ for $i=1, \dots, d$. Here, we require again that $ r_0 = r_d = 1 $.
\end{definition}

Figure~\ref{fig: tensor trains} shows a graphical representation of a tensor train $\mathbf{T} \in \R^{n_1 \times \dots \times n_d}$ as well as a TT operator $\mathbf{A} \in \R^{(n_1 \times n_1) \times \dots \times (n_d \times n_d)}$. A core is depicted as a circle with different arms indicating the modes of the tensor and the rank indices. Due to the fact that $r_0$ and $r_d$ are equal to 1, we regard the first and the last TT core as matrices. Analogously, the first and the last core of $\mathbf{A}$ are interpreted as tensors of order 3.

\begin{figure}[htb]
    \centering
    \begin{subfigure}[b]{0.4\textwidth}
        \centering
        \caption{}
        \vspace*{6.5ex}
        \includegraphics[width=0.9\textwidth]{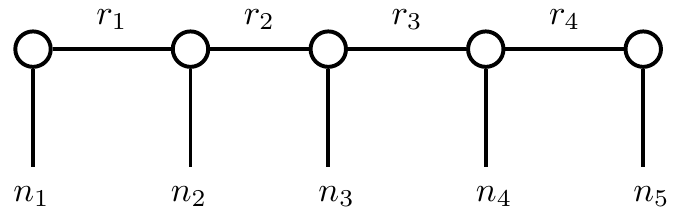}
    \end{subfigure}
    \hspace*{2em}
    \begin{subfigure}[b]{0.4\textwidth}
        \centering
        \caption{}
        \includegraphics[width=0.9\textwidth]{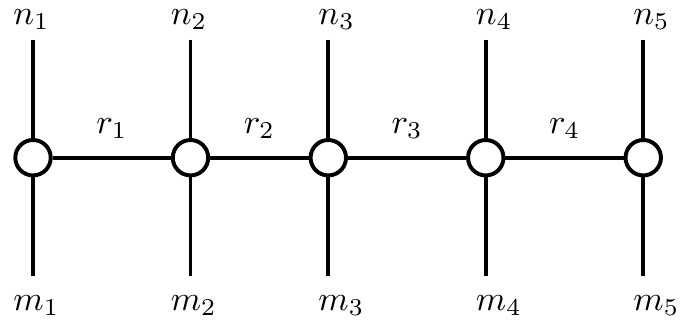}
    \end{subfigure}
    \caption{Graphical representation of tensors: (a) Tensor of order 5 in the TT format, the first and the last core are matrices, the other cores are tensors of order 3. (b) Linear operator of order 5 in the TT format, the first and the last core are tensors of order 3, the other cores are tensors of order 4.}
    \label{fig: tensor trains}
\end{figure}

As for the canonical format, the storage consumption of tensors in the TT format depends only linearly on the number of dimensions. For problems with a certain structure, one can indeed bound the ranks in order to achieve a linear scaling, see e.g~\cite{GELSS2016}. One of the main advantages of the TT format over the canonical format is its stability from an algorithmic point of view \cite{HOLTZ01}. An important property of the TT format is the ensured existence of a best approximation with bounded TT ranks \cite{HOLTZ01, FALCO}. With the aid of the TT format, we are able to avoid the curse of dimensionality -- provided the modes and ranks are reasonably small -- and we can compute quasi-optimal approximations of high-dimensional tensors. Thus, in order to speed up calculations and to be able to solve even high-dimensional problems, low-rank TT representations of linear operators are of utmost importance.

For the sake of comprehensibility, we represent the TT cores as 2-dimensional arrays containing matrices as elements, cf.~\cite{KK12}. For a given tensor-train operator $\mathbf{A} \in \R^{ (n_1 \times n_1 ) \times \dots \times (n_d \times n_d)}$ with cores $\mathbf{A}^{(i)} \in \R^{r_{i-1} \times n_i \times n_i \times r_i}$, $i = 1, \dots, d$, each core is written as
\begin{equation} \label{eq: core notation - single core - operator}
    \left[ \mathbf{A}^{(i)} \right] =
    \begin{bmatrix}
        & \mathbf{A}^{(i)}_{1,:,:,1} & \cdots & \mathbf{A}^{(i)}_{1,:,:,r_i} & \\
        & & & & \\
        & \vdots & \ddots & \vdots & \\
        & & & & \\
        & \mathbf{A}^{(i)}_{r_{i-1},:,:,1} & \cdots & \mathbf{A}^{(i)}_{r_{i-1},:,:,r_i} &
    \end{bmatrix}.
\end{equation}
We then use the following notation for a tensor train $\mathbf{A} \in \R^{(n_1 \times n_1 ) \times \dots \times (n_d \times n_d)}$:
\begin{equation*}
\begin{split}
    \mathbf{A} & = \left[ \mathbf{A}^{(1)}\right] \otimes \left[ \mathbf{A}^{(2)}\right] \otimes \dots \otimes \left[ \mathbf{A}^{(d-1)}\right] \otimes \left[ \mathbf{A}^{(d)}\right] \\[1.5ex]
    &= 
    \begin{bmatrix}
        \mathbf{A}^{(1)}_{1,:,:,1} & \cdots & \mathbf{A}^{(1)}_{1,:,:,r_1}
    \end{bmatrix}
    \otimes 
    \begin{bmatrix}
        \mathbf{A}^{(2)}_{1,:,:,1}   & \cdots & \mathbf{A}^{(2)}_{1,:,:,r_2}  \\
        \vdots                       & \ddots & \vdots                        \\
        \mathbf{A}^{(2)}_{r_1,:,:,1} & \cdots & \mathbf{A}^{(2)}_{r_1,:,:,r_2}
    \end{bmatrix}
    \otimes \cdots \\[1.5ex]
    & \qquad \cdots \otimes
    \begin{bmatrix}
        \mathbf{A}^{(d-1)}_{1,:,:,1}       & \cdots & \mathbf{A}^{(d-1)}_{1,:,:,r_{d-1}}      \\
        \vdots                             & \ddots & \vdots                                  \\
        \mathbf{A}^{(d-1)}_{r_{d-2},:,:,1} & \cdots & \mathbf{A}^{(d-1)}_{r_{d-2},:,:,r_{d-1}}
    \end{bmatrix}
    \otimes
    \begin{bmatrix}
        \mathbf{A}^{(d)}_{1,:,:,1}       \\
        \vdots                           \\
        \mathbf{A}^{(d)}_{r_{d-1},:,:,1}
    \end{bmatrix}.
    \end{split}
\end{equation*}
This operation can be regarded as a generalization of the standard matrix multiplication, where the cores contain matrices as elements instead of scalar values. Just like multiplying two matrices, we compute the tensor products of the corresponding elements and then sum over the columns and rows, respectively. Below, we will use this notation to derive a compact representation of the tensor operators pertaining to nearest neighbor interaction systems.

\subsection{Nearest-Neighbor Interaction Systems}
\label{subsec: NNIS}

Following the terminology used for coupled cell systems, see e.g.~\cite{LIN2008}, we describe a \emph{nearest-neighbor interaction system} (NNIS) as a network of interacting systems -- so-called cells. These cells can represent highly diverse physical or biological systems, e.g.~coupled laser arrays~\cite{WINFUL1988}, $n$-body dynamics~\cite{GRIFFITHS1985}, chemical reaction networks~\cite{GILLESPIE03}, or heterogeneous catalytic processes \cite{GELSS2016}. Given a finite number of cells $\Theta_1, \dots, \Theta_d$ coupled in a chain or a ring, we only allow interactions/reactions $\textrm{R}$ involving one cell $\Theta_i$, $i \in \{1, \dots, d\}$, as well as reactions involving two adjacent cells $\Theta_i$ and $\Theta_{i+1}$, $i \in \{1, \dots, d-1\}$, and, if the system is cyclic, $ \Theta_d $ and $ \Theta_1 $. This coupling structure is shown in Figure~\ref{fig: nearest-neighbor reaction system}.

\begin{figure}[htb]
    \centering
    \includegraphics[width=0.4\textwidth]{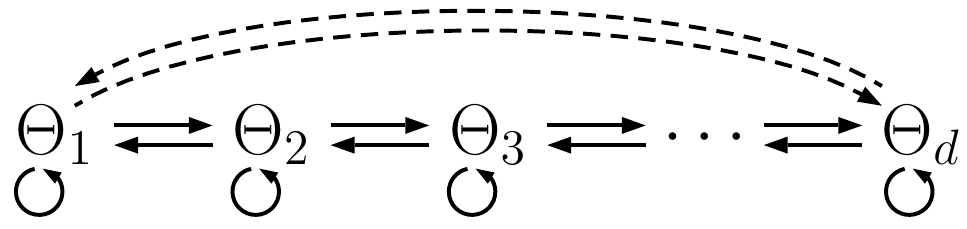}
    \caption{Visualization of nearest-neighbor interaction networks. Elementary interactions/reactions involve only one cell or two cells, respectively. Interactions between the last and the first cell are optional, i.e.~we consider cyclic and non-cyclic systems.}
    \label{fig: nearest-neighbor reaction system}
\end{figure}

We assume that the state space is finite, i.e.~each cell $\Theta_i$ can have $n_i$ different states, which are identified by the set of natural numbers $\{1, \dots, n_i\} $. Thus, the state space $ \mathcal{S} $ is given by
\begin{equation*}
    \mathcal{S} = \{1, \dots, n_1\} \times \{1 , \dots, n_2\} \times \dots \times \{1, \dots, n_d\}
\end{equation*}
and a state of the system by a vector $ X = (x_1, \dots, x_d)^T \in \mathcal{S} $, with $x_i \in \{1, \dots, n_i\}$ for $i=1, \dots , d$. A tensor $\mathbf{T} \in \R^{n_1 \times \dots \times n_d}$ based on nearest-neighbor interactions can be expressed elementwise as
\begin{equation} \label{eq: NNIS elementwise 1}
  \mathbf{T}_{x_1, \dots, x_d} = \sum_{i=1}^d \left( \mathbf{S}_i \right)_{x_i}  + \sum_{i=1}^{d-1} \left( \mathbf{K}_{i,i+1} \right)_{x_i, x_{i+1}} + \left( \mathbf{K}_{d,1} \right)_{x_d,x_1},
\end{equation}
with vectors $\mathbf{S}_i \in \R^{n_i}$ representing interactions on a single cell and matrices $\mathbf{K}_{i,i+1} \in \R^{n_i \times n_{i+1}}$ representing interactions between the cells $\Theta_i$ and $\Theta_{i+1}$. The last term in \eqref{eq: NNIS elementwise 1} is only required for cyclic NNISs, i.e.\ the matrix $\mathbf{K}_{d,1}$ is only nonzero if there is at least one interaction between $\Theta_d$ and $\Theta_1$. We will call an NNIS homogeneous if the cell types and the interactions do not depend on the cell number, i.e.\ $\mathbf{S}_1 = \mathbf{S}_2 = \dots = \mathbf{S}_d$ and $\mathbf{K}_{1,2} = \mathbf{K}_{2,3} = \dots = \mathbf{K}_{d-1,d} (= \mathbf{K}_{d,1})$, otherwise the system is called heterogeneous. Consequently, it also holds that $n_1 = n_2 = \dots = n_d$ if the system is homogeneous.

Analogously, a linear operator $\mathbf{A} \in \R^{(n_1 \times n_1) \times \dots \times (n_d \times n_d)}$ corresponding to an NNIS can be expressed elementwise as
\begin{equation}\label{eq: NNIS elementwise 2}
  \mathbf{A}_{x_1 , y_1 , \dots , x_d , y_d} = \sum_{i=1}^d \left( \mathbf{S}_i \right)_{x_i, y_i}  + \sum_{i=1}^d \left( \mathbf{K}_{i,i+1} \right)_{x_i,y_i, x_{i+1}, y_{i+1}} + \left( \mathbf{K}_{d,1} \right)_{x_d, y_d,x_1, y_1}.
\end{equation}
In this case, the components $\mathbf{S}_i$ are matrices and $\mathbf{K}_{i, i+1}$ are tensors of order 4. As already mentioned in \cite{DOLGOV01}, simple examples for tensors of this form are Ising models and linearly coupled oscillators, see Section~\ref{sec: SLIMTT}. Also more complex operators describing Markovian master equations can have the form \eqref{eq: NNIS elementwise 2}. Examples for these generators can be found in Section~\ref{sec: SLIM Markov}.

\begin{remark}\label{rem: alternative decomposition}
  Alternatively, the representation \eqref{eq: NNIS elementwise 1} may be given by
  \begin{equation}
    \mathbf{T}_{x_1, \dots, x_d} = \sum_{i=1}^d \left( \mathbf{S}_i \right)_{x_i}  + \sum_{i=1}^{d-1} \sum_{\mu = 1}^{\beta_i} \left( L_{i,\mu} \otimes M_{i+1, \mu} \right)_{x_i, x_{i+1}} + \left( L_{d,\mu} \otimes M_{1, \mu} \right)_{x_d,x_1},
  \end{equation}
  In order to obtain this representation, we only have to apply a QR-factorization or a singular value decomposition to the matrices $\mathbf{K}_{i,i+1}$ which would yield
  \begin{equation*}
    \mathbf{K}_{i,i+1} = \sum_{\mu=1}^{\beta_i} L_{i,\mu} \otimes M_{i+1,\mu},
  \end{equation*}
  with vectors $L_{i,\mu} \in \R^{n_i}$, $M_{i+1,\mu} \in \R^{n_{i+1}}$, and $\beta_i$ being the rank of the matrix $\mathbf{K}_{i,i+1}$. The same argumentation can be used for linear operators \eqref{eq: NNIS elementwise 2} in the TT format, as we will show below.
\end{remark}

\section{General SLIM Decomposition}
\label{sec: SLIMTT}

In this paper, we are particularly interested in master equations corresponding to NNISs, i.e.~computing the probability distribution of all states of a system over time. However, the TT decomposition derived here may be applied in a more general way. 

\subsection{Derivation}

Let us consider tensor trains in general. An NNIS can be represented by a tensor that has a canonical representation only consisting of elementary tensors, where at most two (adjacent) components are unequal to a vector of ones or to the identity matrix, respectively. That is, we assume that the canonical decomposition of a tensor $\mathbf{T} \in \R^{n_1 \times \dots \times n_d}$ is given by
\begin{equation} \label{eq: SLIMTT}
    \begin{split}
        \mathbf{T} &= \mathbf{S}_{1} \otimes \mathds{1}_2 \otimes \dots \otimes \mathds{1}_d \quad + \quad \dots \quad + \quad \mathds{1}_1 \otimes \dots \otimes \mathds{1}_{d-1} \otimes \mathbf{S}_{d} \\
        & \quad + \quad \sum_{\mu=1}^{\beta_1}  L_{1,\mu} \otimes M_{2,\mu} \otimes \mathds{1}_3 \otimes \dots \otimes \mathds{1}_d \quad + \quad \dots \quad + \quad \sum_{\mu=1}^{\beta_{d-1}}  \mathds{1}_1 \otimes \dots \otimes \mathds{1}_{d-2} \otimes L_{d-1,\mu} \otimes M_{d,\mu}\\
        & \quad + \quad \sum_{\mu=1}^{\beta_d}  M_{1,\mu} \otimes \mathds{1}_2 \otimes \dots \otimes \mathds{1}_{d-1} \otimes L_{d,\mu}, 
    \end{split}
\end{equation}
with $\mathds{1}_i = (1, \dots , 1)^T \in \R^{n_i}$. The last line of \eqref{eq: SLIMTT} is only required if the system is cyclic. The components $\mathbf{S}_{i}$, $L_{i,\mu}$, and $M_{i,\mu}$ are vectors in $\R^{n_i}$. If we consider a linear operator $\mathbf{A}^{(n_1 \times n_1) \times \dots \times (n_d \times n_d)}$, its canonical decomposition is given by
\begin{equation}\label{eq: SLIMTT op}
    \begin{split}
        \mathbf{A} &= \mathbf{S}_{1} \otimes I_2 \otimes \dots \otimes I_d \quad + \quad \dots \quad + \quad I_1 \otimes \dots \otimes I_{d-1} \otimes \mathbf{S}_{d} \\
        & \quad + \quad \sum_{\mu=1}^{\beta_1}  L_{1,\mu} \otimes M_{2,\mu} \otimes I_3 \otimes \dots \otimes I_d \quad + \quad \dots \quad + \quad \sum_{\mu=1}^{\beta_{d-1}}  I_1 \otimes \dots \otimes I_{d-2} \otimes L_{d-1,\mu} \otimes M_{d,\mu}\\
        & \quad + \quad \sum_{\mu=1}^{\beta_d}  M_{1,\mu} \otimes I_2 \otimes \dots \otimes I_{d-1} \otimes L_{d,\mu}, 
    \end{split}
\end{equation}
with identity matrices $I_i \in \R^{n_i \times n_i}$. In this case, the components $\mathbf{S}_{i}$, $L_{i,\mu}$, and $M_{i,\mu}$ are matrices in $\R^{n_i \times n_i}$. Again, the last line is only required for cyclic systems. Since the derivations of the TT decomposition for \eqref{eq: SLIMTT} and \eqref{eq: SLIMTT op} are almost identical, we will describe only the operator representation from now on. We gather all matrices $L_{i,\mu}$ and $M_{i+1,\mu}$ in the TT cores $\mathbf{L}_i$ and $\mathbf{M}_{i+1}$, respectively. The cores are then defined as
\begin{equation*}
    \begin{alignedat}{2}
        [ \mathbf{L}_i ] 
            & =
            \begin{bmatrix} L_{i,1} &  \dots & L_{i,\beta_i} \end{bmatrix}
            && \in \R^{1 \times n_i \times n_i \times \beta_i }, \\
        [ \mathbf{M}_{i+1} ]
            & =
            \begin{bmatrix} M_{i+1,1}  & \dots & M_{i+1,\beta_i}  \end{bmatrix}^\mathbb{T}
            && \in \R^{\beta_i \times n_{i+1} \times n_{i+1} \times 1},
    \end{alignedat}
\end{equation*}
for $ i = 1, \dots, d-1 $, and
\begin{equation*}
    \begin{alignedat}{2}
        [ \mathbf{L}_d ]
            & =
            \begin{bmatrix} L_{d,1}  & \dots & L_{d,\beta_d}  \end{bmatrix}^\mathbb{T}
            && \in \R^{ \beta_d \times n_d \times n_d \times 1}, \\
        [ \mathbf{M}_1 ]
            & =
            \begin{bmatrix} M_{1,1}  & \dots & M_{1,\beta_d}  \end{bmatrix}
            && \in \R^{1 \times n_1 \times n_1 \times \beta_d },
    \end{alignedat}
\end{equation*}
where we utilize the core-notation from \eqref{eq: core notation - single core - operator} and the definition of rank-transposed TT cores given in Appendix~\ref{app: Tensor Notation and Properties}. With the aid of the TT format, it is often possible to derive more compact representations of tensors than in the canonical format. When different rank-one tensors of a canonical representation share a number of identical cores, these elementary tensors may be expressed as one compact tensor train. Thus, the whole operator can be written in a shorter form as
\begin{equation} \label{eq: SLIMTT op 2}
    \begin{split}
        \mathbf{A} &= \mathbf{S}_1 \otimes I_2 \otimes \dots \otimes I_d  + \dots + I_1 \otimes \dots \otimes I_{d-1} \otimes \mathbf{S}_d \\
        & \quad + \left[ \mathbf{L}_1 \right] \otimes \left[ \mathbf{M}_2\right] \otimes I_3 \otimes \dots \otimes I_d + \dots + I_1 \otimes \dots \otimes I_{d-2} \otimes \left[ \mathbf{L}_{d-1} \right] \otimes \left[ \mathbf{M}_d\right]\\
        & \quad + \left[ \mathbf{M}_1\right] \otimes \left[ \mathbf{J}_2\right] \otimes \dots \otimes \left[ \mathbf{J}_{d-1} \right] \otimes \left[ \mathbf{L}_{d} \right],
    \end{split}
\end{equation}
where $\mathbf{J}_i \in \R^{\beta_d \times n_i \times n_i \times \beta_d} $ is a TT core with
\begin{equation*}
 [ \mathbf{J}_i ] = \begin{bmatrix}
        I_i  &  & 0 \\
           & \ddots & \\
           0 & & I_i
    \end{bmatrix}.
\end{equation*}
Note that $ \mathbf{J}_i $ is not a block matrix but the compact representation of a tensor. As before, the last line of \eqref{eq: SLIMTT op 2} is only required for cyclic systems. Finally, we gather the TT cores $\mathbf{S}_i$, $\mathbf{L}_i$, and $\mathbf{M}_i$ in corresponding supercores and express the linear operator $\mathbf{A}$ in the TT format in a condensed form, namely as a TT decomposition given by
\begin{equation} \label{eq: SLIMTT cyclic, heterogeneous}
\begin{split}
    \mathbf{A} & =  
    \begin{bmatrix}
        \mathbf{S}_1 & \mathbf{L}_1 & \mathbf{I}_1 & \mathbf{M}_1 
    \end{bmatrix}
    \otimes 
    \begin{bmatrix}
        \mathbf{I}_2 & 0            & 0            & 0            \\
        \mathbf{M}_2 & 0            & 0            & 0            \\       
        \mathbf{S}_2 & \mathbf{L}_2 & \mathbf{I}_2 & 0            \\
        0            & 0            & 0            & \mathbf{J}_2
    \end{bmatrix}
    \otimes \dots \otimes
    \begin{bmatrix}
        \mathbf{I}_{d-1} & 0                & 0                & 0                \\
        \mathbf{M}_{d-1} & 0                & 0                & 0                \\       
        \mathbf{S}_{d-1} & \mathbf{L}_{d-1} & \mathbf{I}_{d-1} & 0                \\
        0                & 0                & 0                & \mathbf{J}_{d-1}
    \end{bmatrix}
    \otimes
    \begin{bmatrix}
        \mathbf{I}_{d} \\
        \mathbf{M}_{d} \\       
        \mathbf{S}_{d} \\
        \mathbf{L}_{d}
    \end{bmatrix},
    \end{split}
\end{equation}
where $ \mathbf{I}_i = I_i \in \R^{n_i \times n_i}$. The above equation holds for all heterogeneous, cyclic systems. A proof can be found in Appendix~\ref{app: Properties of the SLIM Decomposition}. From now on, we will call the TT decomposition given in \eqref{eq: SLIMTT cyclic, heterogeneous} \emph{SLIM decomposition}. The origin of this term is explained by the structure of the first core. The TT ranks of the decomposition are given by 
\begin{equation*}
    r_i = 2 + \beta_i + \beta_d,
\end{equation*}
for $ i = 1, \dots, d-1 $. Furthermore, $ r_0 = r_d = 1 $. For the homogeneous case, where all $\beta_i$ are equal, the TT ranks are therefore bounded which results in a linear growth of the storage consumption with increasing dimensionality, cf.\ Appendix~\ref{app: Properties of the SLIM Decomposition}. Even considering heterogeneous systems, we can obtain a linear scaling if the ranks $\beta_i$ and dimensions $n_i$ are bounded. To reduce the storage consumption, the different TT cores can be stored as sparse arrays. An estimate of the storage consumption is given in Appendix~\ref{app: Properties of the SLIM Decomposition}. For homogeneous systems, the decomposition can be simplified to
\begin{equation} \label{eq: SLIMTT cyclic, homogeneous}
\begin{split}
    \mathbf{A} & =  
    \begin{bmatrix}
        \mathbf{S} & \mathbf{L} & \mathbf{I} & \mathbf{M} 
    \end{bmatrix}
    \otimes 
    \begin{bmatrix}
        \mathbf{I} & 0          & 0          & 0            \\
        \mathbf{M} & 0          & 0          & 0            \\       
        \mathbf{S} & \mathbf{L} & \mathbf{I} & 0            \\
        0          & 0          & 0          & \mathbf{J}
    \end{bmatrix}
    \otimes \dots \otimes
    \begin{bmatrix}
        \mathbf{I} & 0          & 0          & 0            \\
        \mathbf{M} & 0          & 0          & 0            \\       
        \mathbf{S} & \mathbf{L} & \mathbf{I} & 0            \\
        0          & 0          & 0          & \mathbf{J}
    \end{bmatrix}
    \otimes
    \begin{bmatrix}
        \mathbf{I} \\
        \mathbf{M} \\       
    \mathbf{S} \\
    \mathbf{L}
    \end{bmatrix}.
    \end{split}
\end{equation}
A beneficial property of the SLIM decomposition of homogeneous systems is the arbitrary scaling of the interaction system, i.e.~decreasing or increasing the number of cells corresponds to removing or inserting a TT core. Only the first and the last core in \eqref{eq: SLIMTT cyclic, homogeneous} have to be fixed. The number of cores in between can differ, but must be greater than 0 (since cyclic systems have at least 3 cells, two linked cells are represented by \eqref{eq: SLIMTT non-cyclic, heterogeneous}).

If we consider a non-cyclic system, either homogeneous or heterogeneous, interactions between the last and the first cell are omitted. In particular, the SLIM decomposition for non-cyclic, heterogeneous systems is given by
\begin{equation} \label{eq: SLIMTT non-cyclic, heterogeneous}
\begin{split}
    \mathbf{A} & =  
    \begin{bmatrix}
        \mathbf{S}_1 & \mathbf{L}_1 & \mathbf{I}_1
    \end{bmatrix}
    \otimes 
    \begin{bmatrix}
        \mathbf{I}_2 & 0            & 0                       \\
        \mathbf{M}_2 & 0            & 0                       \\       
                \mathbf{S}_2 & \mathbf{L}_2 & \mathbf{I}_2 
    \end{bmatrix}
    \otimes \dots \otimes
    \begin{bmatrix}
        \mathbf{I}_{d-1} & 0                & 0                 \\
        \mathbf{M}_{d-1} & 0                & 0                 \\       
        \mathbf{S}_{d-1} & \mathbf{L}_{d-1} & \mathbf{I}_{d-1}
    \end{bmatrix}
    \otimes
    \begin{bmatrix}
        \mathbf{I}_{d} \\
        \mathbf{M}_{d} \\       
        \mathbf{S}_{d} 
    \end{bmatrix}.
\end{split}
\end{equation}

\subsection{Examples}

\subsubsection{Ising Model}

The first example is taken from the field of statistical mechanics. Ising models were proposed by Wilhelm Lenz \cite{LENZ1920} and first studied in detail by Ernst Ising \cite{ISING1925}. Consisting of $ d $ cells that represent magnetic dipole moments of atomic spins, Ising models describe ferromagnetic effects in solids. The spin corresponding to each cell can be in two states, $+1$ or $-1$. Usually the cells are arranged in an $d$-dimensional lattice where only adjacent cells interact. Ising models are of particular interest since they can be solved exactly.

Interactions are either between two adjacent cells or on one cell. Let $ x_i = \pm 1 $ be the state of cell $\Theta_i$. The spin configuration of the whole system is then given by $X = (x_1, \dots, x_d)^T$. Furthermore, we denote by $\mathcal{N}$ the set of all pairs of indices corresponding to nearest neighbors, i.e.~$(i,j) \in \mathcal{N}$ if $\Theta_i$ and $\Theta_j$ are adjacent cells. We consider the Hamiltonian function $ H $ with
\begin{equation*}
    H(X) = - \sum_{(i,j) \in \mathcal{N}} J_{ij} x_i x_j - \mu \sum_{i = 1}^{d} h_i x_i .
\end{equation*}
The equation above represents the energy of the spin configuration $X$. The interaction strength between two adjacent cells $\Theta_i$ and $\Theta_j$ is denoted by $J_{ij}$, $\mu$ and $h_i$ represent the magnetic moment and an external magnetic field, respectively.

\begin{figure}[htb]
    \centering
    \includegraphics[width=0.4\textwidth]{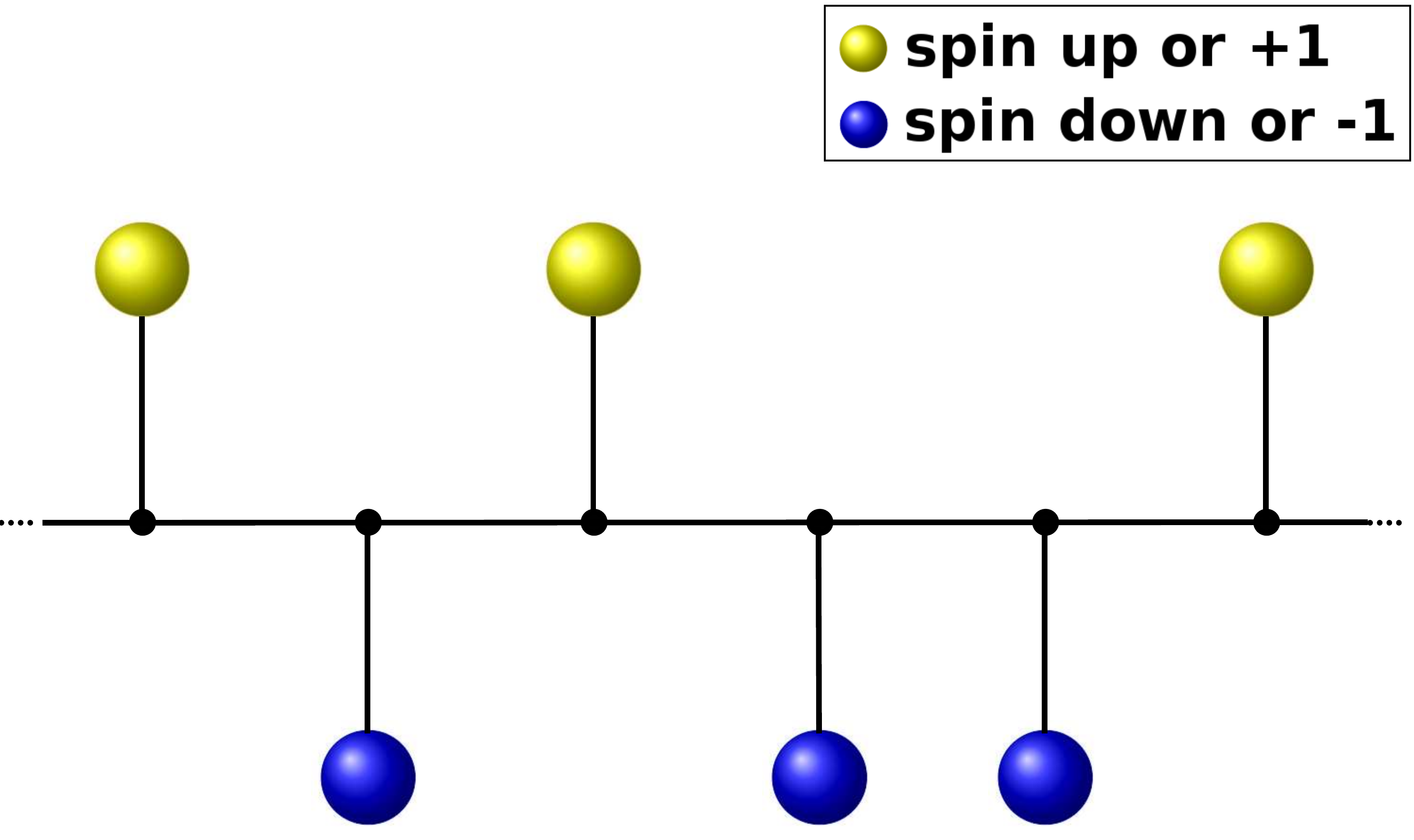}
    \caption{Pictorial representation of the Ising model.}
    \label{fig: Ising model}
\end{figure}

Here, we focus on the one-dimensional Ising model (see Figure~\ref{fig: Ising model}), which is represented by a cyclic, homogeneous NNIS, i.e.\ we consider $d$ cells $ \Theta_1, \dots, \Theta_d$ arranged in a ring. It is common to simplify the Ising model by assuming the same interaction strength between all nearest neighbors, i.e.\ $J_{ij} = J$ for all $(i,j) \in \mathcal{N}$, and a constant external magnetic field, i.e.\ $h_i = h$ for $i = 1, \dots, d$. The Hamiltonian of the one-dimensional Ising model then becomes 
\begin{equation*}
    H(X) = - \sum_{i=1}^{d} J x_i x_{i+1} - \mu \sum_{i = 1}^{d} h x_i ,
\end{equation*}
with $x_{d+1} = x_1$. Now, we consider the Hamiltonian function as a tensor $\mathbf{H} \in \R^{2 \times \dots \times 2}$. That is, all indices can take the value $1$ and $2$, respectively, with
\begin{equation*}
    \mathbf{H}_{y_1, \dots, y_d} = H(x_1, \dots, x_d),
\end{equation*}
where
\begin{equation*}
    x_i=
    \begin{cases}
        +1,~\text{if}~y_i=1,\\
        -1,~\text{if}~y_i=2.
    \end{cases}
\end{equation*}
Using the canonical format, we can express the tensor $\mathbf{H}$ as
{ \allowdisplaybreaks
\begin{align*}
    \mathbf{H} =& - \mu h \cdot
    \begin{pmatrix}
        + 1 \\ -1
    \end{pmatrix}
    \otimes
    \mathds{1}
    \otimes
    \dots
    \otimes
    \mathds{1}  -  \mu h \cdot
    \mathds{1}
    \otimes
    \dots
    \otimes
    \mathds{1}
    \otimes
    \begin{pmatrix}
        + 1 \\ -1
    \end{pmatrix} \\
    & -J \cdot 
    \begin{pmatrix}
        + 1 \\ -1
    \end{pmatrix} 
    \otimes 
    \begin{pmatrix}
        + 1 \\ -1
    \end{pmatrix} 
    \otimes
    \mathds{1}
    \otimes
    \dots
    \otimes 
    \mathds{1} - \dots \\
		& \dots - J \cdot 
    \mathds{1}
    \otimes 
    \dots 
    \otimes
    \mathds{1}
    \otimes
    \begin{pmatrix}
        + 1 \\ -1
    \end{pmatrix} 
    \otimes
    \begin{pmatrix}
        + 1 \\ -1
    \end{pmatrix} \\
    &-J \cdot 
    \begin{pmatrix}
        + 1 \\ -1
    \end{pmatrix}
    \otimes
    \mathds{1}
    \otimes
    \dots 
    \otimes
    \mathds{1}
    \otimes
    \begin{pmatrix}
        + 1 \\ -1
    \end{pmatrix}.
\end{align*}}

By defining $\mathbf{S} = - \mu h \cdot (+ 1 , -1 )^T$, $\mathbf{L} = - J \cdot ( + 1 , -1 )^T$, $\mathbf{I} = (1,1)^T$, and $\mathbf{M} = ( + 1 , -1 )^T$, we can express $\mathbf{H}$ as a SLIM decomposition similar to \eqref{eq: SLIMTT cyclic, homogeneous}:
\begin{equation*}
\begin{split}
    \mathbf{H} & =  
    \begin{bmatrix}
        \mathbf{S} & \mathbf{L} & \mathbf{I} & \mathbf{M} 
    \end{bmatrix}
    \otimes 
    \begin{bmatrix}
        \mathbf{I} & 0          & 0          & 0            \\
        \mathbf{M} & 0          & 0          & 0            \\       
        \mathbf{S} & \mathbf{L} & \mathbf{I} & 0            \\
        0          & 0          & 0          & \mathbf{J}
    \end{bmatrix}
    \otimes \dots \otimes
    \begin{bmatrix}
        \mathbf{I} & 0          & 0          & 0            \\
        \mathbf{M} & 0          & 0          & 0            \\       
        \mathbf{S} & \mathbf{L} & \mathbf{I} & 0            \\
        0          & 0          & 0          & \mathbf{J}
    \end{bmatrix}
    \otimes
    \begin{bmatrix}
        \mathbf{I} \\
        \mathbf{M} \\       
    \mathbf{S} \\
    \mathbf{L}
    \end{bmatrix}.
    \end{split}
\end{equation*}

\subsubsection{Linearly Coupled Oscillator}

The second example for a general application of the SLIM decomposition is a quantum system consisting of a one-dimensional chain of $d$ identical harmonic oscillators coupled by springs, see e.g.~\cite{LIEVENS2006, PLENIO2004}. Figure~\ref{fig: oscillators} shows an illustration of this quantum system. Assuming periodic boundary conditions, the Hamiltonian operator of the system is given by
\begin{equation} \label{eq: hamiltonian oscillator}
  \hat{H} = \sum_{i=1}^{d} \left( \frac{1}{2m}  \hat{p}^2_i + \frac{m \omega^2}{2}  \hat{x}_i^2 + \frac{c m}{2}(\hat{x}_i - \hat{x}_{i+1})^2 \right),
\end{equation}
where $m$ is the mass of each oscillator, $\omega$ its natural frequency, and $c$ the coupling strength. 

\begin{figure}[htb]
    \centering
    \includegraphics[width=0.4\textwidth]{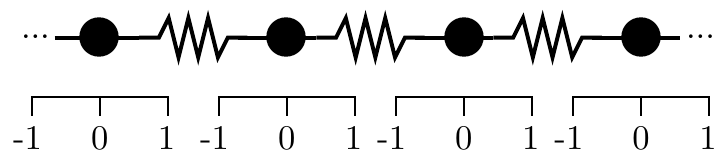}
    \caption{Visualization of linearly coupled oscillators, where all masses are in equilibrium position. Displacements $x_i$ are between $-1$ and $1$.}
    \label{fig: oscillators}
\end{figure}

\noindent The displacement of the $i$-th mass point with respect to its equilibrium position is measured by the position operator $\hat{x}_i$, while $\hat{p}_i$ represents the momentum operator of the $i$-th mass point. Applied to a wave function $\psi_i$ at position $x_i$ of the $i$-th oscillator, these operators can be expressed as
\begin{equation*}
  \hat{p}_i \psi_i(x_i) = -\imath \hslash \frac{\partial}{\partial x} \psi_i(x_i) \qquad \textrm{and} \qquad \hat{x}_i \psi_i(x_i) = x_i \cdot \psi_i(x_i),
\end{equation*}
where $\imath$ denotes the imaginary unit. Using the finite difference method with mesh width $h = 1/m$, $m \in \mathbb{N}$, we obtain
\begin{equation}
  \hat{p}_i^2 \psi_i(x_i) \approx -\hslash^2 \frac{\psi_i(x_i-h) - 2 \psi_i(x_i) + \psi_i(x_i+h)}{h^2}.
\end{equation}
Defining the discrete displacements $x_{i,k} = k \cdot h$ for $k = - m , \dots ,0, \dots , m$ and $\psi_{i,k}$ representing the numerical approximation of $\psi_i(x_{i,k})$, this yields
\begin{equation}\label{eq: discrete momentum}
  \left( \frac{\hslash}{h} \right)^2 
  \underbrace{\begin{pmatrix}
    \phantom{-}2 & -1           &        & \\
    -1           & \phantom{-}2 & \ddots     & \\
                 & \ddots       & \ddots & - 1          \\
                 &              & -1     & \phantom{-}2
  \end{pmatrix}}_{=: D_{p}}
  \begin{pmatrix}
    \psi_{i,-m} \\
    \vdots \\
    \psi_{i,m}
  \end{pmatrix}.
\end{equation}
Discretizing $\hat{x}_i$, we obtain
\begin{equation}\label{eq: discrete position}
  \underbrace{\begin{pmatrix}
    x_{i,-m} &  & \\
           & \ddots & \\
           &        & x_{i,m}
  \end{pmatrix}}_{=:D_x}
  \begin{pmatrix}
    \psi_{i,-m} \\
    \vdots \\
    \psi_{i,m}
  \end{pmatrix}.
\end{equation}
Since the momentum and position operators $\hat{p}_i$ and $\hat{x}_i$ act only on the wave function corresponding to the cell/oscillator $\Theta_i$, we can express these operators as rank-one tensors containing the matrices from \eqref{eq: discrete momentum} and \eqref{eq: discrete position}, respectively, as a component. Representing the discretization of the position and momentum operators as tensor decompositions $\hat{\mathbf{p}}_i$ and $\hat{\mathbf{x}}_i$, we can write
\begin{equation*}
  \begin{split}
    \hat{\mathbf{p}}^2_i &= \left( \frac{\hslash}{h} \right)^2  \cdot I \otimes \dots \otimes I \otimes
    \underbrace{D_p}_{\shortstack{\scriptsize\textit{i}{-th}\\\scriptsize{component}}}
    \otimes I \otimes \dots \otimes I, \\
    \hat{\mathbf{x}}^2_i & = I \otimes \dots \otimes I \otimes
    \underbrace{D_x^2}_{\shortstack{\scriptsize\textit{i}{-th}\\\scriptsize{component}}}
    \otimes I \otimes \dots \otimes I.
        \end{split}
\end{equation*}
The rules for tensor multiplication then imply that we can write $\left( \hat{\mathbf{x}}_i - \hat{\mathbf{x}}_{i+1} \right)^2$, compare \eqref{eq: hamiltonian oscillator}, as
\begin{equation*}
  \begin{split}        
    \left( \hat{\mathbf{x}}_i - \hat{\mathbf{x}}_{i+1} \right)^2 & = \hat{\mathbf{x}}_i^2 \quad + \quad  \hat{\mathbf{x}}_{i+1}^2\\
		& \quad - \quad 2 \cdot I \otimes \dots \otimes I \otimes
    \underbrace{D_x}_{\shortstack{\scriptsize\phantom{(}\textit{i}{-th}\phantom{)}\\\scriptsize{component}}} \otimes
     \underbrace{D_x}_{\shortstack{\scriptsize{(}\textit{i}{+1)-th}\\\scriptsize{component}}}
    \otimes I \otimes \dots \otimes I .
  \end{split}
\end{equation*}
Finally, by defining 
\begin{equation*}
    \mathbf{S} = \frac{\hslash^2}{2 m h^2}
  D_p
  + \frac{m \omega^2}{2} 
  D_x^2
    + cm
  D_x^2, \quad \mathbf{L}  = -cm 
    D_x, 
    \quad \mathbf{M}  =  
    D_x,
\end{equation*}
we can give the SLIM decomposition of the discretized Hamiltonian $\hat{\mathbf{H}}$ as
\begin{equation*}
\begin{split}
    \hat{\mathbf{H}} & =  
    \begin{bmatrix}
        \mathbf{S} & \mathbf{L} & \mathbf{I} & \mathbf{M} 
    \end{bmatrix}
    \otimes 
    \begin{bmatrix}
        \mathbf{I} & 0          & 0          & 0            \\
        \mathbf{M} & 0          & 0          & 0            \\       
        \mathbf{S} & \mathbf{L} & \mathbf{I} & 0            \\
        0          & 0          & 0          & \mathbf{J}
    \end{bmatrix}
    \otimes \dots \otimes
    \begin{bmatrix}
        \mathbf{I} & 0          & 0          & 0            \\
        \mathbf{M} & 0          & 0          & 0            \\       
        \mathbf{S} & \mathbf{L} & \mathbf{I} & 0            \\
        0          & 0          & 0          & \mathbf{J}
    \end{bmatrix}
    \otimes
    \begin{bmatrix}
        \mathbf{I} \\
        \mathbf{M} \\       
    \mathbf{S} \\
    \mathbf{L}
    \end{bmatrix}.
    \end{split}
\end{equation*}

\section{SLIM Decomposition for Markov Generators}
\label{sec: SLIM Markov}

In this section, we will consider tensor representations of Markovian master equations and illustrate the results using a simple guiding example. Consider a continuous-time Markov jump process on the state space $\mathcal{S}$. Let $P(X,t)$ be the probability that the system is in state $X$ at time $t$ under the condition that it was in state $X_0$ at time $t_0$. For the sake of simplicity, the dependence on the initial state is omitted. The probability distribution $P(X,t)$ then obeys an MME~\cite{KAMPEN}, given by
\begin{equation*}
    \pd{}{t} P(X,t) = \sum_{Y} W(X | Y) P(Y,t) - \sum_Y W(Y | X) P(X,t),
\end{equation*}
where $W(Y | X)$ is the transition rate to go from state $X$ to state $Y$ by an elementary reaction as described above. Note that $W(Y | X)$ is only nonzero if $X$ and $Y$ are elements of the state space $\mathcal{S}$ and there is an elementary reaction $\textrm{R}_\mu$ involving both states. If we denote the net changes in the state vector $X$ caused by a single firing of $\textrm{R}_\mu$ by the vector $\xi_\mu \in \mathbb{Z}^d$, the \emph{reaction propensity} $a_\mu$ is given by
\begin{equation*}
    a_\mu(X) = W(X+\xi_\mu \vert X),
\end{equation*}
which is only nonzero if $X$ satisfies the requirements that $\textrm{R}_\mu$ may fire. Thus, summing over all $ \mathcal{M} $ allowed reactions $ \textrm{R}_1, \dots, \textrm{R}_\mathcal{M} $, we obtain
\begin{equation} \label{MME as CME}
    \pd{}{t} P(X,t) = \sum_{\mu=1}^\mathcal{M} a_\mu(X-\xi_\mu)P(X-\xi_\mu,t) - a_\mu(X)P(X,t).
\end{equation}
We assume that all reaction events are local, i.e.~an event will only change the configuration in the vicinity of a particular cell. Thus, the number of elementary reactions we have to consider is bounded and the number $\mathcal{M}$ will be very small compared to the size of the state space.

Equation \eqref{MME as CME} has the same structure as a so-called \emph{chemical master equation} (CME) \cite{GILLESPIE01}, a special type of an MME which describes the time-evolution of a chemical system. However, in our case, the state space does not represent numbers of different molecules, instead it denotes the more general configuration of the cells. Due to the summation in \eqref{MME as CME}, we consider exactly all possible states from which $ X $ can be reached and all states that can be reached from X by a single firing of one of the elementary reactions $\textrm{R}_\mu$.

From now on, we consider a Markovian master equation defined on an NNIS. The elementary reactions occurring in such systems are of the form
\begin{equation}\label{eq: reactions}
  \begin{aligned}
      & \text{(i)}   & \quad & \textrm{R}_\mu: x_i          & \rightarrow \quad & y_i, \\
      & \text{(ii)}  & \quad & \textrm{R}_\mu: x_i, x_{i+1} & \rightarrow \quad & y_i, y_{i+1}, \quad\text{and} \\
      & \text{(iii)} & \quad & \textrm{R}_\mu: x_d, x_1     & \rightarrow \quad & y_d, y_1,
  \end{aligned}
\end{equation}
respectively, with $x_i , y_i \in \{1, \dots, n_i\}$ for $i = 1, \dots, d$. That is, each reaction only changes the state of one cell or of two adjacent cells. We will call these types \emph{single-cell reactions} (SCR) and \emph{two-cell reactions} (TCR). TCRs of the form (iii) only occur in cyclic systems.

\begin{example} \label{ex: guiding}
  \begin{figure}[H]
      \centering
      \includegraphics[width=0.4\textwidth]{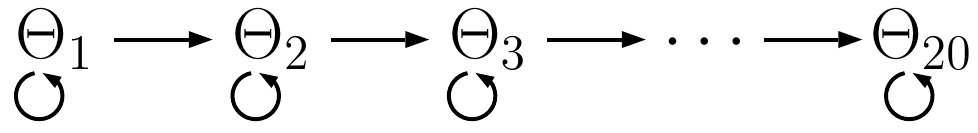}
      \caption{Visualization of the signal cascade model. Elementary reactions on one cell are creation and destruction reactions. Elementary reactions proceed only in one direction and represent the creation of one protein depending on the number of the preceding proteins.}
      \label{fig: signaling cascade}
  \end{figure}

  As a simple example of an NNIS, we consider a cascading process on a genetic network consisting of 20 genes, see~\cite{DOLGOV01, HEGLAND2007, AMMAR2011}. Here, the cells represent adjacent genes producing proteins that affect the expression of subsequent genes. The state of a cell describes the number of such proteins. In Figure \ref{fig: signaling cascade}, the structure of this system is shown. The reactions and reaction propensities are:
  
  \vspace{0.25cm}
  \noindent \textit{Creation of the first protein corresponding to} $\Theta_1$:
  \begin{equation*}
    \textrm{R}_1: x_1 \rightarrow x_1 +1, \quad  a_1(X) = 0.7.
  \end{equation*}

  \noindent \textit{Creation of a protein corresponding to} $\Theta_i$, $2 \leq i \leq d$:
  \begin{equation*}
    \textrm{R}_i: x_{i-1}, x_i  \rightarrow x_{i-1}, x_i +1, \quad  a_i(X) = \frac{x_{i-1}}{5 + x_{i-1}}.
  \end{equation*}
  
  \noindent \textit{Destruction of a protein corresponding to} $\Theta_i$, $1 \leq i \leq d$:
  \begin{equation*}
    \textrm{R}_{i+d}:  x_i  \rightarrow  x_i - 1, \quad  a_{i+d}(X) = 0.07 \, x_i.
  \end{equation*}
  
  If we start with an initial state where all numbers of proteins are 0, the value of the probability density function for any $x_i \geq 63$ is below machine precision for all times $t \geq 0$, see \cite{DOLGOV01}. Thus, we can consider a finite state space 
  \begin{equation*}
    \mathcal{S} = \{0, \dots, 63\} \times \dots \times \{0, \dots, 63\} .
  \end{equation*}
  We map this state space to $\mathcal{S}' = \{1, \dots, 64\} \times \dots \times \{1, \dots, 64\}$ since we identify the states of each cell by a set of natural numbers as mentioned above. In this way, we prevent conflicts with the later introduced tensor indexing notation. The model is non-cyclic and heterogeneous since the first creation reaction differs from the other creation reactions. An exact TT decomposition of the corresponding MME operator of this system can be found in \cite{DOLGOV01}. That decomposition, however, differs from the generally applicable SLIM decomposition as we will show in the following sections. \exampleSymbol
\end{example}

\subsection{Tensor Representation of the Markovian Master Equation}

In this section, we show how to derive tensor-based expressions of MMEs written as CMEs and introduce corresponding quantities such as, for instance, multidimensional shift operators. For an analogous derivation of the tensor representation of a CME, see \cite{DOLGOV01}. Let $\mathcal{M}$ be the number of all elementary reactions involving one or two cells. We identify each propensity function $a_\mu: \mathcal{S} \rightarrow \R$ with a tensor $\mathbf{a}_\mu \in \R^{n_1 \times \dots \times n_d}$, i.e.\ for a state $X =(x_1 , \dots , x_d)^T \in \mathcal{S}$, we define 
\begin{equation*}
  \left( \mathbf{a}_\mu \right)_{x_1 , \dots , x_d} = a_\mu(X),
\end{equation*}
for $\mu = 1 , \dots , \mathcal{M}$. These propensity tensors can then be expressed in the canonical format as
\begin{equation*}
  \mathbf{a}_\mu = \sum_{k=1}^{r_\mu} \left( \mathbf{a}_\mu^{(1)}\right)_{k,:} \otimes \dots \otimes \left( \mathbf{a}_\mu^{(d)}\right)_{k,:},
\end{equation*}
with cores $\mathbf{a}^{(i)}_\mu \in \R^{r_\mu \times n_i}$. Here, we only rely on the fact that $\mathbf{a}_\mu$ can be expressed as a canonical tensor without taking the rank $r_\mu$ into account. Furthermore, we describe the probabilities $ P(X,t) $ by a tensor $\mathbf{P}(t) \in \mathbb{R}^{n_1 \times \dots \times n_d}$, with
\begin{equation*}
    (\mathbf{P} (t))_{x_1, \dots, x_d} = P(X,t).
\end{equation*}

\begin{definition}\label{def: shift operators}
Let $G_i(k) \in \mathbb{R}^{n_i \times n_i}$ denote the shift matrix which is given by $(G_i(k))_{x,y} := \delta_{y-x, k}$ with $\delta_{y-x, k}$ representing the Kronecker delta\footnote{Note that $G_i(0)$ is then simply the identity matrix in $\mathbb{R}^{n_i \times n_i}$.}. Then the \emph{multidimensional shift operators} $\mathbf{G}_{\mu}$ and $\mathbf{G}_0$ are defined as
\begin{equation*}
    \mathbf{G}_\mu= G_1(-\xi_{\mu} (1)) \otimes \dots \otimes G_d(-\xi_{\mu}(d))
\end{equation*}
and
\begin{equation*}
    \mathbf{G}_0= G_1(0) \otimes \dots \otimes G_d(0) =: \mathbf{I}.
\end{equation*}
\end{definition}

With the aid of this definition, we can now reformulate \eqref{MME as CME} in a more compact way as
\begin{equation} \label{cme_tensor_notation}
    \pd{}{t} \textbf{P}(t) = \left ( \sum_{\mu=1}^{\mathcal{M}}  (\mathbf{G}_{\mu} - \mathbf{I} ) \cdot \diag(\textbf{a}_{\mu} )\right ) \cdot \textbf{P}(t),
\end{equation}
where we define $\diag(\textbf{a}_{\mu})$ to be the tensor product of matrices containing the entries of $\left( \textbf{a}_{\mu}^{(1)} \right)_{k,:}, \dots, \left( \textbf{a}_{\mu}^{(d)} \right)_{k,:}$ as diagonals, i.e.
\begin{equation*}
    \diag(\textbf{a}_{\mu}) = \sum_{k=1}^{r_\mu} \diag\left( \left( \textbf{a}_{\mu}^{(1)} \right)_{k,:} \right) \otimes \dots \otimes \diag \left( \left( \textbf{a}_{\mu}^{(d)} \right)_{k,:} \right),
\end{equation*}
for $ \mu = 1, \dots, \mathcal{M} $. A proof that this notation is equivalent to the master equation given in \eqref{MME as CME} can be found in Appendix~\ref{app: Tensor Notation and Properties} as well as in~\cite{GELSS2016}. In what follows, let
\begin{equation} \label{equation:TTME:operator}
    \mathbf{A} = \sum_{\mu=1}^{\mathcal{M}} (\mathbf{G}_{\mu} - \mathbf{I} ) \cdot \diag(\textbf{a}_{\mu}),
\end{equation}
so that \eqref{cme_tensor_notation} can be written as $ \pd{}{t} \textbf{P}(t) = \mathbf{A} \cdot \textbf{P}(t)$. The aim is then to solve the tensor-based MME numerically using implicit integration schemes. The resulting systems of linear equations can be solved, for instance, with ALS~\cite{HOLTZ02}.

\begin{example}\label{ex: cascade 2}
Let us consider our guiding example defined above and illustrate what the reaction propensities, the vectors of net changes, and the shift operators look like in this case. For this purpose, we split the reactions into creation and destruction operations, i.e.~the reaction $\textrm{R}_\mu$, $\mu = 1, \dots, d$, represents the creation of the protein corresponding to cell $\Theta_\mu$ and for $\mu = d+1, \dots, 2d$, we consider the destruction of the proteins, see Example \ref{ex: guiding}. Written as rank-one tensors, the reaction propensities have the following form:
\begin{align*}
\mathbf{a}_1 &= 0.7 \cdot \mathds{1}  \otimes \dots \otimes \mathds{1}, 
    & \mathbf{a}_{d+1} & = \begin{pmatrix} 0.07 \cdot 0 \\ \vdots \\ 0.07 \cdot 63 \end{pmatrix} \otimes  \mathds{1} \otimes  \dots  \otimes \mathds{1}, \\[1ex]
    \mathbf{a}_2 &= \begin{pmatrix} \frac{0}{5+0} \\ \vdots \\ \frac{63}{5+63} \end{pmatrix}\otimes \mathds{1} \otimes \dots  \otimes \mathds{1},
    & \mathbf{a}_{d+2} & = \mathds{1} \otimes \begin{pmatrix} 0.07 \cdot 0 \\ \vdots \\ 0.07 \cdot 63 \end{pmatrix} \otimes  \mathds{1} \otimes \dots \otimes \mathds{1}, \\
    & \vdots && \vdots \\
    \mathbf{a}_d &= \mathds{1} \otimes \dots \otimes \mathds{1} \otimes  \begin{pmatrix} \frac{0}{5+0} \\ \vdots \\ \frac{63}{5+63} \end{pmatrix} \otimes \mathds{1},
    & \mathbf{a}_{2d} & = \mathds{1}  \otimes \dots \otimes \mathds{1} \otimes \begin{pmatrix} 0.07 \cdot 0 \\ \vdots \\ 0.07 \cdot 63 \end{pmatrix}.
\end{align*}
The vectors of net changes are all zero except for one entry, i.e.
\begin{align*}
    \xi_1 &= \begin{pmatrix} 1 & 0 & \cdots & 0 \end{pmatrix},
    & \xi_{d+1} &= \begin{pmatrix} -1 & 0 & \cdots & 0 \end{pmatrix}, \\
    & \vdots && \vdots \\
    \xi_d &= \begin{pmatrix} 0 & \cdots & 0 & 1\end{pmatrix},
    & \xi_{2d} &=  \begin{pmatrix} 0 & \cdots & 0 & -1\end{pmatrix}.
\end{align*}
By defining the shift matrices $G^{\downarrow} := G_i(-1)$ and $G^{\uparrow} := G_i(1)$ for $i = 1, \dots , d$, i.e.
\begin{align*}
    G^{\downarrow} = \begin{pmatrix} 0 &  & & 0\\ 1 & 0  & &  \\ & \ddots & \ddots \\ 0 & & 1 & 0 \end{pmatrix} \qquad \textrm{and} \qquad G^{\uparrow} = \begin{pmatrix} 0 & 1 & & 0\\  & 0  & \ddots &  \\ &  & \ddots & 1 \\ 0 & &  & 0 \end{pmatrix},
\end{align*}
the corresponding shift operators for the creation and destruction reactions are given by
\begin{align*}
\mathbf{G}_1 = G^{\downarrow} \otimes I \otimes \dots \otimes I, \quad \dots, \quad \mathbf{G}_d = I \otimes \dots \otimes I \otimes G^{\downarrow},
\end{align*}
and
\begin{align*}
\mathbf{G}_{d+1} = G^{\uparrow} \otimes I \otimes \dots \otimes I, \quad \dots, \quad \mathbf{G}_{2d} = I \otimes \dots \otimes I \otimes G^{\uparrow}.
\end{align*}
We will describe the resulting SLIM decomposition in the next section. \exampleSymbol
\end{example}

\subsection{Derivation}
\label{subsec: SLIM}

As mentioned above, for the nearest-neighbor interaction networks considered in this paper, an elementary reaction involves only one or two cells, respectively, i.e.~elementary reactions either depend on the state of a single cell or on the states of two adjacent cells, see \eqref{eq: reactions}. This implies that any reaction propensity that belongs to an SCR $\mathrm{R}_{i,\mu}$ on cell $\Theta_i$ has the form
\begin{equation} \label{single-cell propensity}
    \mathbf{a}_{i,\mu} = \mathds{1}_1 \otimes \dots \otimes \mathds{1}_{i-1} \otimes \mathrm{a}_{i,\mu} \otimes \mathds{1}_{i+1} \otimes \dots \otimes \mathds{1}_d,
\end{equation}
with $\mathrm{a}_{i,\mu} \in \R^{n_i}$ and $\mu=1, \dots , \alpha_i$, where $\alpha_i \in \mathbb{N}$ is the number of all SCRs on $\Theta_i$. For the two-cell propensities $\mathbf{a}_{i,i+1,\mu}$ corresponding to TCRs $\mathrm{R}_{i,i+1,\mu}$ on the cell pairs $\Theta_i$ and $\Theta_{i+1}$, $i = 1, \dots , d-1$, we can write
\begin{equation} \label{two-cell propensity}
    \mathbf{a}_{i,i+1,\mu} = \mathds{1}_1 \otimes \dots \otimes \mathds{1}_{i-1} \otimes \mathrm{a}_{i,i+1,\mu}  \otimes \mathds{1}_{i+2} \otimes \dots \otimes \mathds{1}_d,
\end{equation}
with $\mathrm{a}_{i,i+1,\mu} \in \R^{n_i \times n_{i+1}}$. We assume that there are $\beta_i$ TCRs between the cells $\Theta_i$ and $\Theta_{i+1}$, i.e.\ $\mu = 1, \dots , \beta_i$. As already mentioned in Remark \ref{rem: alternative decomposition}, we can decompose $\mathrm{a}_{i,i+1,\mu }$ into canonical tensor cores by applying, for instance, a singular value decomposition\footnote{Assume that $ \mathrm{a}_{i,i+1,\mu} = U \Sigma V^T = \sum_{k=1}^r \sigma_k u_k \otimes v_k $, then $ \left( \mathrm{a}_{i,i+1,\mu}^{(1)} \right)_{k,:} = \sigma_k u_k $ and $ \left( \mathrm{a}_{i,i+1,\mu}^{(2)} \right)_{k,:} = v_k $.} or QR-factorization, i.e. 
\begin{equation*}
  \mathrm{a}_{i,i+1,\mu} = \sum_{k=1}^{r_{i,i+1,\mu}} \left( \mathrm{a}_{i,i+1,\mu}^{(1)} \right)_{k,:} \otimes  \left( \mathrm{a}_{i,i+1,\mu}^{(2)} \right)_{k,:}.
\end{equation*}
For cyclic systems, we consider the permuted propensity tensor $\tilde{\mathbf{a}}_{d,1,\mu}$ with $\left( \tilde{\mathbf{a}} _{d,1,\mu}\right)_{x_2, \dots , x_d, x_1} = \left( \mathbf{a}_{d,1,\mu} \right)_{x_1 , x_2 ,\dots , x_d}$, see Remark~\ref{re: cyclic permutation}. This tensor can then be written as
\begin{equation*}
  \tilde{\mathbf{a}}_{d,1,\mu} = \mathds{1}_2 \otimes \dots \otimes \mathds{1}_{d-1} \otimes \mathrm{a}_{d,1,\mu},
\end{equation*}
for $\mu = 1, \dots, \beta_d$, with $\beta_d$ being the number of TCRs between the cells $\Theta_d$ and $\Theta_1$. Again, we can decompose $\mathrm{a}_{d,1,\mu}$ and write
\begin{equation} \label{eq: propensity cyclic - 1}
  \tilde{\mathbf{a}}_{d,1,\mu} = \sum_{k=1}^{r_{d,1,\mu}} \mathds{1}_2 \otimes \dots \otimes \mathds{1}_{d-1} \otimes \left( \mathrm{a}_{d,1,\mu}^{(1)} \right)_{k,:} \otimes \left( \mathrm{a}_{d,1,\mu}^{(2)} \right)_{k,:}.
\end{equation}
As stated in Remark \ref{re: cyclic permutation}, a cyclic permutation of the cores corresponds to a permutation of the indices. Thus, we have
\begin{equation} \label{eq: propensity cyclic - 2}
  \mathbf{a}_{d,1,\mu} = \sum_{k=1}^{r_{d,1,\mu}}  \left( \mathrm{a}_{d,1,\mu}^{(2)} \right)_{k,:} \otimes \mathds{1}_2 \otimes \dots \otimes \mathds{1}_{d-1} \otimes \left( \mathrm{a}_{d,1,\mu}^{(1)} \right)_{k,:} ,
\end{equation}
for $\mu = 1, \dots , \beta_d$. That is, only the components corresponding to one cell or to two adjacent cells of each reaction propensity are unequal to a vector of ones and we obtain
\begin{equation*}
    \diag\left( \mathbf{a}_{i,\mu} \right)  = I \otimes \dots \otimes I \otimes \diag\left( \mathrm{a}_{i,\mu} \right) \otimes I \otimes \dots \otimes I,
\end{equation*}
and
\begin{equation*}
    \diag\left( \mathbf{a}_{i,i+1,\mu} \right)  = \sum_{k=1}^{r_{i,i+1,\mu}}I \otimes \dots \otimes I \otimes \diag\left( \left( \mathrm{a}_{i,i+1,\mu}^{(1)} \right)_{k,:} \right) \otimes \diag\left( \left( \mathrm{a}_{i,i+1,\mu}^{(2)} \right)_{k,:} \right) \otimes I \otimes \dots \otimes I,
\end{equation*}
respectively. Analogously, for a cyclic system we obtain
\begin{equation*}
    \diag\left( \mathbf{a}_{d,1,\mu} \right)  = \sum_{k=1}^{r_{d,1,\mu}} \diag\left( \left( \mathrm{a}_{d,1,\mu}^{(2)} \right)_{k,:} \right) \otimes I \otimes \dots \otimes I \otimes \diag\left( \left( \mathrm{a}_{d,1,\mu}^{(1)} \right)_{k,:} \right).
\end{equation*}
Furthermore, any elementary reaction only changes the configuration of one or two cells. Thus, any vector of net changes has the form
\begin{equation} \label{single-cell net change}
    \xi_{i,\mu} = (0, \dots, 0, p_{i,\mu}, 0, \dots, 0)^T,
\end{equation}
with $p_{i,\mu} \in \mathbb{Z}$, for an SCR $\mathrm{R}_{i,\mu}$, or 
\begin{equation} \label{two-cell net change}
    \xi_{i,i+1,\mu} = (0, \dots, 0, p_{i,i+1,\mu}, q_{i,i+1,\mu}, 0, \dots, 0)^T,
\end{equation}
with $p_{i,i+1,\mu},q_{i,i+1,\mu} \in \mathbb{Z}$, for a TCR $\mathrm{R}_{i,i+1,\mu}$. As as result, the multidimensional shift operators also have a special structure. Using $ G_i(0) = I \in \R^{n_i \times n_i}$, we obtain a shift operator $\mathbf{G}_{i,\mu}$ belonging to an SCR $\textrm{R}_{i,\mu}$
\begin{equation*}
    \mathbf{G}_{i,\mu} = I \otimes \dots \otimes I \otimes G_i(-p_{i,\mu}) \otimes I \otimes \dots \otimes I
\end{equation*}
and for a shift operator $\mathbf{G}_{i,i+1,\mu}$ belonging to a TCR $\textrm{R}_{i,i+1,\mu}$ (we identify $\textrm{R}_{d,d+1,\mu} $ with $\textrm{R}_{d,1,\mu}$)
\begin{equation*}
    \mathbf{G}_{i,i+1,\mu} = I \otimes \dots \otimes I \otimes G_i(-p_{i,i+1,\mu} ) \otimes G_{i+1}(-q_{i,i+1,\mu} ) \otimes I \otimes \dots \otimes I,
\end{equation*}
and
\begin{equation*}
    \mathbf{G}_{d,1,\mu} = G_{1}(-q_{d,1,\mu}) \otimes I \otimes \dots \otimes I \otimes G_d(-p_{d,1,\mu}).
\end{equation*}
That is, only one or two shift matrices unequal to an identity matrix appear within these multidimensional shift operators. The properties above imply that we can write the operator $\mathbf{A}$ of a non-cyclic NNIS as
\begin{equation} \label{eq: MME operator splitted}
    \mathbf{A} = \sum_{i=1}^{d} \sum_{\mu=1}^{\alpha_i} \mathbf{A}_{i,\mu} + \sum_{i=1}^{d-1} \sum_{\mu=1}^{\beta_i} \mathbf{A}_{i,i+1,\mu},
\end{equation}
with
\begin{equation} \label{eq: component}
\begin{split}
    \mathbf{A}_{i,\mu} & = \left( \mathbf{G}_{i,\mu} - \mathbf{I} \right)\cdot \diag\left( \mathbf{a}_{i,\mu}\right) \\
    &= I \otimes \dots \otimes I \otimes \left( G_i(-p_{i,\mu} ) \cdot \diag\left(\mathrm{a}_{i,\mu}\right)\right) \otimes I \otimes \dots \otimes I \\
    & \quad - I \otimes \dots \otimes I \otimes  \diag\left(\mathrm{a}_{i,\mu}\right)  \otimes I \otimes \dots \otimes I
\end{split}
\end{equation}
and
\begin{equation} \label{eq: component 2}
\begin{split}
    \mathbf{A}_{i,i+1,\mu} & = \left( \mathbf{G}_{i,i+1,\mu} - \mathbf{I} \right)\cdot \diag\left( \mathbf{a}_{i,i+1,\mu}\right) \\ 
        &= \sum_{k=1}^{r_{i,i+1,\mu}} I \otimes \dots \otimes I \otimes \left(  G_i(-p_{i,i+1,\mu})  \cdot \diag\left(\left(\mathrm{a}_{i,i+1,\mu}^{(1)}\right)_{k,:}\right)\right) \\
        & \qquad \qquad \otimes \left(  G_{i+1}(-q_{i,i+1,\mu})  \cdot \diag\left(\left(\mathbf{a}_{i,i+1,\mu}^{(2)}\right)_{k,:}\right)\right) \otimes I \otimes \dots \otimes I\\
    & \qquad - \sum_{k=1}^{r_{i,i+1,\mu}} I \otimes \dots \otimes I \otimes  \diag\left(\left(\mathrm{a}_{i,i+1,\mu}^{(1)}\right)_{k,:}\right) \otimes  \diag\left(\left(\mathbf{a}_{i,i+1,\mu}^{(2)}\right)_{k,:}\right) \otimes I \otimes \dots \otimes I.
\end{split}
\end{equation}
For a cyclic system, we add the term $\sum_{\mu=1}^{\beta_d} \mathbf{A}_{d,1,\mu}$ to (\ref{eq: MME operator splitted}), with $\mathbf{A}_{d,1,\mu} = \left( \mathbf{G}_{d,1,\mu} - \mathbf{I} \right) \cdot \diag\left(\mathbf{a}_{d,1,\mu}\right)$. Note that~\eqref{eq: component} holds for all elementary reactions taking place only on the cell $\Theta_i$, whereas reactions involving two adjacent cells $\Theta_i$ and $\Theta_{i+1}$ can be represented by \eqref{eq: component 2}. In order to simplify the notation, we now define the matrices
\begin{equation}
    S_{i,\mu} = G_i(-p_{i,\mu} ) \cdot \diag\left(\mathrm{a}_{i,\mu}\right) \qquad \textrm{and} \qquad \tilde{S}_{i,\mu} = \diag\left(\mathrm{a}_{i,\mu}\right),
\end{equation}
for $i=1, \dots, d$ and $\mu=1, \dots, \alpha_i$, as well as
\begin{equation} \label{eq: SLIM matrices - L,M}
    \begin{aligned} 
        L_{i,\mu,k} & = G_i(-p_{i,i+1,\mu})  \cdot \diag\left(\left(\mathrm{a}_{i,i+1,\mu}^{(1)}\right)_{k,:}\right), & \tilde{L}_{i,\mu,k} & = \diag\left(\left(\mathrm{a}_{i,i+1,\mu}^{(1)}\right)_{k,:}\right),\\
        M_{i+1,\mu,k} & = G_{i+1}(-q_{i,i+1,\mu})  \cdot \diag\left(\left(\mathrm{a}_{i,i+1,\mu}^{(2)}\right)_{k,:}\right), & \quad \tilde{M}_{i+1,\mu,k} & = \diag\left(\left(\mathrm{a}_{i,i+1,\mu}^{(2)}\right)_{k,:}\right),
    \end{aligned}
\end{equation}
for $ i = 1, \dots, d-1$, $ \mu = 1, \dots, \beta_i $ and $k = 1, \dots, r_{i,i+1,\mu}$. For cyclic systems, we further define
\begin{equation*}
    \begin{aligned}
        L_{d,\mu,k} & = G_d(-p_{d,1,\mu})  \cdot \diag\left(\left(\mathrm{a}_{d,1,\mu}^{(1)}\right)_{k,:}\right), & \tilde{L}_{d,\mu,k} & = \diag\left(\left(\mathrm{a}_{d,1,\mu}^{(1)}\right)_{k,:}\right),\\
        M_{1,\mu,k} & = G_{1}(-q_{d,1,\mu})  \cdot \diag\left(\left(\mathrm{a}_{d,1,\mu}^{(2)}\right)_{k,:}\right), & \quad \tilde{M}_{1,\mu,k} & = \diag\left(\left(\mathrm{a}_{d,1,\mu}^{(2)}\right)_{k,:}\right),
    \end{aligned}
\end{equation*}
for $\mu = 1, \dots, \beta_d$ and $k = 1, \dots r_{d,1,\mu}$. Due to the bilinearity of the tensor product (see Appendix~\ref{app: Tensor Notation and Properties}), we can compute the sum of all matrices $S_{i,\mu}$ and $\tilde{S}_{i,\mu}$ and define 
\begin{equation} \label{eq: SLIM cores - S}
  \mathbf{S}_i = \sum_{\mu=1}^{\alpha_i} \left( S_{i,\mu} -  \tilde{S}_{i,\mu} \right).
\end{equation}
All SCRs on $\Theta_i$ can then be represented as $I \otimes \dots \otimes I \otimes \mathbf{S}_i \otimes I \otimes \dots \otimes I$. Written in the canonical tensor format, $\mathbf{A}$ has now the form
\begin{equation}\label{eq: preSLIM}
\begin{split}
    \mathbf{A} &= \mathbf{S}_1 \otimes I \otimes \dots \otimes I  + \dots + I \otimes \dots \otimes I \otimes \mathbf{S}_d \\
    & \quad + \sum_{\mu=1}^{\beta_1}  \sum_{k=1}^{r_{1,2,\mu}} \Big( L_{1,\mu,k} \otimes M_{2,\mu,k} \otimes I \otimes \dots \otimes I \Big) - \left( \tilde{L}_{1,\mu,k} \otimes \tilde{M}_{2,\mu,k} \otimes I \otimes \dots \otimes I \right) \\
    & \quad + \dots \\
    & \quad + \sum_{\mu=1}^{\beta_{d-1}} \sum_{k=1}^{r_{d-1,d,\mu}} \Big( I \otimes \dots \otimes I \otimes L_{d-1,\mu,k} \otimes M_{d,\mu,k} \big) - \left( I \otimes \dots \otimes I \otimes  \tilde{L}_{d-1,\mu,k} \otimes \tilde{M}_{d,\mu,k} \right) \\
    & \quad + \sum_{\mu=1}^{\beta_{d}} \sum_{k=1}^{r_{d,1,\mu}} \Big( M_{1,\mu,k} \otimes I \otimes \dots \otimes I \otimes  L_{d,\mu,k} \Big) -  \left( \tilde{M}_{1,\mu,k} \otimes I \otimes \dots \otimes I \otimes  \tilde{L}_{d,\mu,k} \right).
\end{split}
\end{equation}
The last sum is only required if we consider a cyclic interaction system, i.e.~there are $\beta_d$ elementary reactions between the cell $\Theta_d$ and $\Theta_1$. Equation \eqref{eq: preSLIM} is of the same type as \eqref{eq: SLIMTT op}. Thus, we can gather all the matrices $L_{i,\mu,k}, \tilde{L}_{i,\mu,k}$ and $M_{i+1,\mu,k}, \tilde{M}_{i+1,\mu,k}$ in the TT cores $\mathbf{L}_i$ and $\mathbf{M}_{i+1}$, respectively. The cores are then defined as
\begin{equation} \label{eq: two-cell cores 1}
    \begin{split}
        [ \mathbf{L}_i ] 
            & =
            \underbrace{\begin{bmatrix} L_{i,1,1} & -\tilde{L}_{i,1,1} & \dots & L_{i,\beta_i,r_{i,i+1,\beta_i}} &  -\tilde{L}_{i,\beta_i,r_{i,i+1,\beta_i}} \end{bmatrix}}_{\in \R^{1 \times n_i \times n_i \times (\beta_i \cdot r_{i,i+1,\beta_i})}}, \\
        [ \mathbf{M}_{i+1} ]
            & =
            \underbrace{\begin{bmatrix} M_{i+1,1,1} & \tilde{M}_{i+1,1,1} & \dots & M_{i+1,\beta_i,r_{i,i+1,\beta_i}} &  \tilde{M}_{i+1,\beta_i,r_{i,i+1,\beta_i}} \end{bmatrix}^\mathbb{T}}_{\in \R^{(\beta_i \cdot r_{i,i+1,\beta_i}) \times n_{i+1} \times n_{i+1} \times 1}},
    \end{split}
\end{equation}
for $ i = 1, \dots, d-1 $, and
\begin{equation} \label{eq: two-cell cores 2}
    \begin{split}
        [ \mathbf{L}_d ]
            & =
            \underbrace{\begin{bmatrix} L_{d,1,1} & -\tilde{L}_{d,1,1} & \dots & L_{d,\beta_d,r_{d,1,\beta_d}} &  -\tilde{L}_{d,\beta_d,r_{d,1,\beta_d}} \end{bmatrix}^\mathbb{T}}_{\in \R^{( \beta_d \cdot r_{d,1,\beta_d} )\times n_d \times n_d \times 1}}, \\
        [ \mathbf{M}_1 ]
            & =
            \underbrace{\begin{bmatrix} M_{1,1,1} & \tilde{M}_{1,1,1} & \dots & M_{1,\beta_d,r_{d,1,\beta_d}} &  \tilde{M}_{1,\beta_d,r_{d,1,\beta_d}} \end{bmatrix}}_{\in \R^{1 \times n_1 \times n_1 \times (\beta_d \cdot r_{d,1,\beta_d})}}.
    \end{split}
\end{equation}
These TT cores can now be inserted into Equation \eqref{eq: SLIMTT cyclic, heterogeneous} and we obtain the SLIM decomposition of the generator $\mathbf{A}$. Note that it may be possible to compress the cores $\mathbf{L}_i$ and $\mathbf{M}_{i+1}$. By considering the tensor product $\left[ \mathbf{L}_i \right] \otimes \left[ \mathbf{M}_{i+1} \right] $, we can conclude that only a basis of matrices of the core $\mathbf{L}_i$ as well as a basis of matrices of the core $\mathbf{M}_{i+1}$ is needed such that the tensor multiplication of these bases yields the same result as $\left[ \mathbf{L}_i \right] \otimes \left[ \mathbf{M}_{i+1} \right] $. This is explained in detail in Algorithm \ref{alg: compress}, where we use the multi-index notation described in Appendix~\ref{app: Tensor Notation and Properties}.

\begin{algorithm}[htb]
    \caption{Compression of 2-dimensional tensor-train operators}
    \label{alg: compress}
    \begin{algorithmic}[1]
        \State INPUT: Tensor train $\mathbf{T} = [\mathbf{T}_1] \otimes [\mathbf{T}_2]\in \R^{m \times m \times n \times n}$ with TT cores $[\mathbf{T}_1] = [\mathbf{T}_1^{(1)} \dots \mathbf{T}_1^{(\beta)}] \in \R^{1 \times m \times m \times \beta}$ and
        \LineCommentCont{$[\mathbf{T}_2] = [\mathbf{T}_2^{(1)} \dots \mathbf{T}_2^{(\beta)}]^T \in \R^{\beta \times n \times n \times 1}$.}
                \State Compute full tensor $\mathbf{T}$ and reshape it as a matrix $T \in \R^{(m \cdot m) \times (n \cdot n)}$.
                \State Apply compact singular value decomposition, i.e.\ $T = U \Sigma V^T$ with $U \in \R^{(m \cdot m) \times \gamma}$, $\Sigma \in \R^{\gamma \times \gamma}$, and $V \in \R^{(n \cdot n ) \times \gamma}$.
                \For{$k=1, \dots ,\gamma$}
                    \State Define $\tilde{\mathbf{T}}_1^{(k)} \in \R^{m \times m}$ by $ \left( \tilde{\mathbf{T}}_1^{(k)} \right)_{x,y} = U_{\overline{x,y},k}$.
                    \State Define $\tilde{\mathbf{T}}_2^{(k)} \in \R^{n \times n}$ by $ \left( \tilde{\mathbf{T}}_2^{(k)} \right)_{x,y} = (\Sigma V^T)_{k,\overline{x,y}}$.
                \EndFor
                \State OUTPUT: Tensor train $\tilde{\mathbf{T}} = [\tilde{\mathbf{T}}_1] \otimes [\tilde{\mathbf{T}}_2]$ with TT rank $\gamma \leq \beta$.
    \end{algorithmic}
\end{algorithm}

The cores corresponding to cyclic reactions between the cells $\Theta_d$ and $\Theta_1$ can also be compressed by applying Algorithm \ref{alg: compress} to $[\mathbf{L}_d]^\mathbb{T} \otimes [\mathbf{M}_1]^\mathbb{T}$. This becomes clear using the same argument as for \eqref{eq: propensity cyclic - 1} and \eqref{eq: propensity cyclic - 2}. The core components defined in \eqref{eq: SLIM cores - S}, \eqref{eq: two-cell cores 1}, and \eqref{eq: two-cell cores 2} (optionally after application of Algorithm \ref{alg: compress}) then form the corresponding elements of the SLIM decomposition given in \eqref{eq: SLIMTT cyclic, heterogeneous} and \eqref{eq: SLIMTT non-cyclic, heterogeneous}, respectively. Algorithm~\ref{alg: SLIM} can be used to automatically construct SLIM decompositions of master equation operators corresponding to systems based on nearest-neighbor interactions.

\begin{algorithm}[htbp]
    \caption{Construct SLIM decomposition of master equation operator}
    \label{alg: SLIM}
    \begin{algorithmic}[1]
        \State Given an NNIS with $d$ cells on state space $\mathcal{S} = \{1 , \dots , n_1\} \times \dots \times \{1 , \dots , n_d\}$. If the NNIS is cyclic, set $\Theta_{d+1} = \Theta_1$, $n_{d+1}=n_1$, $\textrm{R}_{d,d+1,\mu} = \textrm{R}_{d,1,\mu}$ etc.
        \State INPUT: \emph{Single-cell reactions} (SCR) 
        \LineCommentCont{For each cell $\Theta_i$, $1 \leq i \leq d$, and every $\textrm{R}_{i,\mu}$, $\mu = 1 , \dots , \alpha_i$, define the net change $p_{i, \mu} \in \mathbb{Z}$ (see (\ref{single-cell net change})) and the vector $\mathrm{a}_{i, \mu} \in \R^{n_i}$ (see (\ref{single-cell propensity})) containing the values of the corresponding reaction propensity.}
        \State \phantom{INPUT:} \emph{Two-cell reactions} (TCR)
        \LineCommentCont{For each pair of cells $\Theta_i$, $\Theta_{i+1}$, $1 \leq i \leq d-1$ ($1 \leq i \leq d$ if cyclic), and every $\textrm{R}_{i,i+1,\mu}$, $\mu = 1 , \dots , \beta_i$, define the net changes $p_{i,i+1, \mu}, q_{i,i+1,\mu} \in \mathbb{Z}$ (see \eqref{two-cell net change}) and the matrix $\mathrm{a}_{i, i+1,\mu} \in \R^{n_i \times n_{i+1}}$ (see \eqref{two-cell propensity}) containing the values of the corresponding reaction propensity.}
        \For{$i=1, \dots ,d$}
        \State Compute $\mathbf{S}_i = \sum_{\mu=1}^{\alpha_i} \left( G_i(-p_{i,\mu}) - I\right) \cdot \diag \left({\mathrm{a}_{i,\mu}} \right)$ as defined in (\ref{eq: SLIM cores - S}).
    \EndFor
    \For{$i=1, \dots ,d-1$ ($i=1, \dots , d$ if NNIS is cyclic)}
      \For{$\mu=1, \dots ,\beta_i$}
        \State Compute canonical representation of $\mathrm{a}_{i,i+1,\mu}$, i.e.\ $\mathrm{a}_{i,i+1,\mu} = \sum_{k=1}^{r_{i,i+1,\mu}} \left( \mathrm{a}^{(1)}_{i,i+1,\mu} \right)_{k,:} \otimes \left( \mathrm{a}^{(2)}_{i,i+1,\mu} \right)_{k,:}$.
        \State Compute $L_{i,\mu,k}$, $\tilde{L}_{i,\mu,k}$, $M_{i+1,\mu,k}$, and  $\tilde{M}_{i+1,\mu,k}$ as defined in (\ref{eq: SLIM matrices - L,M}).
      \EndFor
      \State Construct $\mathbf{L}_i$ and $\mathbf{M}_{i+1}$ as defined in (\ref{eq: two-cell cores 1}) and (\ref{eq: two-cell cores 2}).
      \State Apply Algorithm \ref{alg: compress} to $[\mathbf{L}_i] \otimes [\mathbf{M}_{i+1}]$ in order to compress the cores $\mathbf{L}_i$ and $\mathbf{M}_i$.
    \EndFor
    \State OUTPUT: SLIM decomposition of master equation operator $\mathbf{A}$ as given in (\ref{eq: SLIMTT cyclic, heterogeneous}) and (\ref{eq: SLIMTT non-cyclic, heterogeneous}), respectively.
    \end{algorithmic}
\end{algorithm}

\begin{example}
  Using the decompositions given in Example~\ref{ex: cascade 2}, we now construct the SLIM decomposition corresponding to our signal cascade model with $d$ genes, cf.~\cite{DOLGOV01}. The reaction propensities satisfy \eqref{single-cell propensity}, i.e.~they can be written as a rank-one tensor where only one component is unequal to a vector of ones. As defined in \eqref{equation:TTME:operator}, the master equation operator is given by
  \begin{equation*}
    \mathbf{A} = \sum_{\mu=1}^{M} (\mathbf{G}_{\mu} - \mathbf{I} ) \cdot \diag(\textbf{a}_{\mu}),
  \end{equation*}
  with $M = 2d$. In canonical format, we can express this as
  \begin{equation*}
    \begin{split}
      \mathbf{A} & =  0.7 \cdot G^{\downarrow} \otimes I \otimes \dots \otimes I ~~ - ~~ 0.7 \cdot I \otimes I \otimes \dots \otimes I \\
      & ~~~ + ~ H_1 \otimes G^{\downarrow} \otimes I \otimes \dots \otimes I ~~ - ~~ H_1 \otimes I \otimes \dots \otimes I \\
      & ~~~ + ~ \dots \\
      & ~~~ + ~ I \otimes \dots \otimes I \otimes H_1 \otimes G^{\downarrow} ~~ - ~~ I \otimes \dots \otimes I \otimes H_1 \otimes I \\
      & ~~~ + ~ ( G^{\uparrow} \cdot H_2) \otimes I \otimes \dots \otimes I ~~ - H_2 \otimes I \otimes \dots \otimes I \\
      & ~~~ + ~ \dots \\
      & ~~~ + ~ I \otimes \dots \otimes I \otimes ( G^{\uparrow} \cdot H_2) ~~ - ~~ I \otimes \dots \otimes I \otimes H_2,
    \end{split}
  \end{equation*}
  with identity matrix $I \in \R^{64 \times 64}$, $G^{\downarrow}$ and $G^{\uparrow}$ as defined in Example \ref{ex: cascade 2} and
  \begin{equation*}
    H_1 = \diag\left(\frac{0}{5}, \frac{1}{6}, \dots, \frac{63}{68}\right), \qquad H_2 = 0.07 \cdot \diag(0, 1, \dots , 63),
  \end{equation*}
  where $\diag(v)$ denotes the diagonalization of the vector $v \in \R^{64}$. By defining 
  \begin{equation*}
    \mathbf{S}^{*} = 0.7 \cdot \left( G^{\downarrow} -  I \right) , \quad \mathbf{S} = \left( G^{\uparrow} - I \right) \cdot H_2 , \quad  \mathbf{L}=H_1, \quad  \mathbf{I}=I, \quad \mathbf{M}=G^{\downarrow} - I,
  \end{equation*}
  we obtain the non-cyclic SLIM decomposition
  \begin{equation}
      \mathbf{A} =
      \begin{bmatrix}
      \mathbf{S}^{*} & \mathbf{L} & \mathbf{I} 
      \end{bmatrix}
      \otimes 
      \begin{bmatrix}
      \mathbf{I} & 0          & 0                     \\
      \mathbf{M} & 0          & 0                     \\       
      \mathbf{S} & \mathbf{L} & \mathbf{I}            
      \end{bmatrix}
      \otimes \dots \otimes
      \begin{bmatrix}
      \mathbf{I} & 0          & 0                     \\
      \mathbf{M} & 0          & 0                     \\       
      \mathbf{S} & \mathbf{L} & \mathbf{I}            
      \end{bmatrix}
      \otimes
      \begin{bmatrix}
      \mathbf{I} \\
      \mathbf{M} \\       
      \mathbf{S} 
      \end{bmatrix}. \tag*{\exampleSymbol}
  \end{equation} 
\end{example}

\begin{remark}
In order to speed up calculations and to reduce the storage consumption even further, one can apply the so-called \emph{quantized tensor-train format} (QTT format) \cite{Ose09, Ose10, KHOROMSKIJ2011}. That is, we reorder the elements of each core in a new tensor which has the same number of elements but a higher order and smaller mode sizes. These tensors can then be split into several QTT cores, e.g.~by applying singular value decompositions. In \cite{DOLGOV01}, a TT/QTT decomposition of the operator $\mathbf{A}$ and some numerical results using the Crank-Nicolson time integration scheme can be found. 
\end{remark}

\subsection{Examples}

\subsubsection{CO Oxidation at Ru\texorpdfstring{O$_2$}{O2}}

The first example is a cyclic, homogeneous NNIS that we already considered in \cite{GELSS2016}. However, the SLIM decomposition, which is more general, had not been developed at that time and a different TT decomposition was used instead. Here, we consider a heterogeneous catalytic process where the cells $\Theta_1, \dots, \Theta_d$ represent adsorption sites on a $\textrm{RuO}_2$(110) surface, see Figure \ref{fig: RuO2 a}. The aim is to simulate the CO oxidation at the surface. Because it has been found that the chemical kinetics predominantly take place only on the \emph{coordinatively unsaturated sites} (cus) \cite{MESKINE}, we construct a ring of $d$ cus sites, see Figure \ref{fig: RuO2 b} and \ref{fig: RuO2 c}. 

\begin{figure}[H]
    \centering
    \begin{subfigure}[b]{0.35\textwidth}
        \centering\caption{}
        \includegraphics[height=85px]{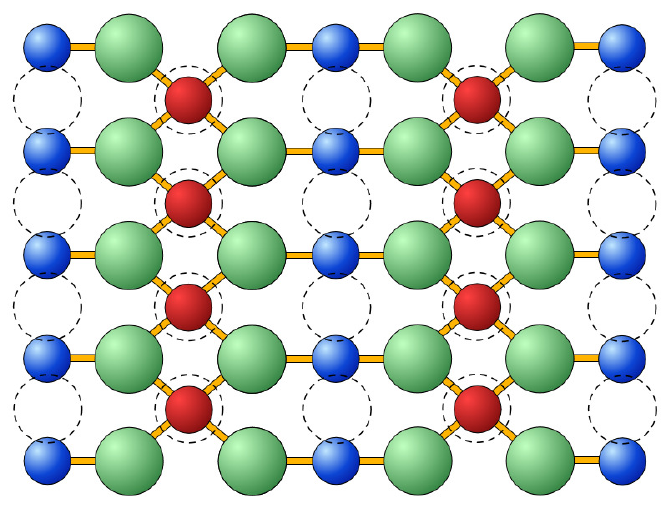}
        \label{fig: RuO2 a}
    \end{subfigure}
    \hspace{2mm}
    \begin{subfigure}[b]{0.35\textwidth}
        \centering \caption{}
        \includegraphics[height=85px]{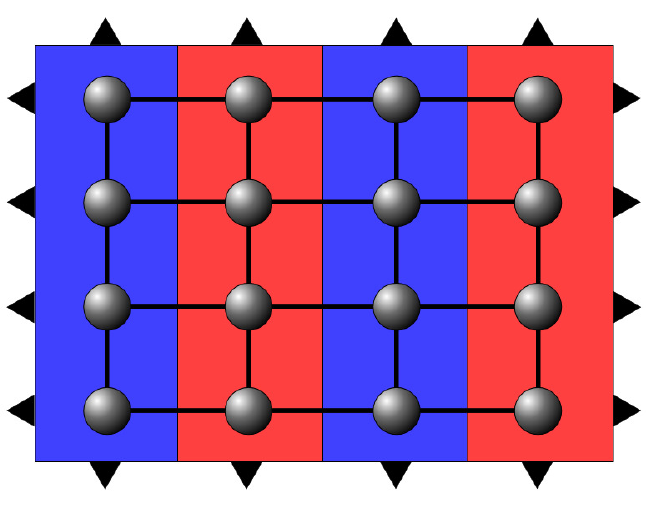}
        \label{fig: RuO2 b}
    \end{subfigure}
    \hspace{2mm}
    \begin{subfigure}[b]{0.15\textwidth}
        \centering \caption{}
        \includegraphics[height=85px]{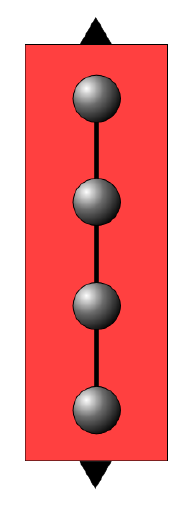}
        \label{fig: RuO2 c}
    \end{subfigure}
    \caption{(a) Top view of the $\textrm{RuO}_2$(110) surface showing the two prominent adsorption sites, bridge sites between the ruthenium atoms in blue and cus sites on the ruthenium atoms in red. (b) 2D lattice model of the coarse-grained surface composed of alternating rows of bridge and cus sites as used in \cite{REUTER, MATERA}. (c) 1D lattice model completely composed of cus sites.}
    \label{fig: RuO2}
\end{figure}

Each site may be in three different states (1 $\widehat{=}$ empty, 2 $\widehat{=}$ O-covered, 3 $\widehat{=}$ CO-covered). The possible events are unimolecular adsorption/desorption of CO, dissociative oxygen adsorption on two neighboring sites and the corresponding reverse processes, diffusion of adsorbed CO/O to a neighboring site, and the formation of gaseous CO$_2$ from adsorbed CO and O on neighboring sites. For further details on the established microkinetic model for the CO oxidation at $\textrm{RuO}_2$(110) we refer to \cite{REUTER}. Table \ref{table:reactions} summarizes the elementary reactions of the reduced model and the specific values of the reaction propensities. For a detailed description of the gas phase conditions, see e.g.~\cite{MATERA}. 

\begin{table}[H]
\centering
\renewcommand{\arraystretch}{1.2}
\setlength\tabcolsep{1.5mm}
\begin{footnotesize}
\begin{tabular}{|llcccll|}
    \hline
    \multicolumn{7}{|>{\columncolor[gray]{0.9}}l|}{\textbf{Adsorption}} \\ \hline \hline
$\textrm{R}^{\textrm{Ad}}_{\textrm{O}_2}$ &:&    $\varnothing_i + \varnothing_{j}$ & $\rightarrow$ & $\textrm{O}_i + \textrm{O}_{j}$ & , \quad $k^{\textrm{Ad}}_{\textrm{O}_2}$ &$=~~~9.7 \cdot 10^7 s^{-1}$ \\
$\textrm{R}^{\textrm{Ad}}_{\textrm{CO}}$ &:&    $\varnothing_i$ & $\rightarrow$ & $\textrm{CO}_i$ & , \quad $k^{\textrm{Ad}}_{\textrm{CO}}$ &$=~~~10^4 - 10^{10} s^{-1}$ \\
\hline
\hline
\multicolumn{7}{|>{\columncolor[gray]{0.9}}l|}{\textbf{Desorption}} \\ \hline \hline
$\textrm{R}^{\textrm{De}}_{\textrm{O}_2}$ &:&    $\textrm{O}_i + \textrm{O}_{j}$ & $\rightarrow$ & $\varnothing_i + \varnothing_{j}$ & , \quad  $k^{\textrm{De}}_{\textrm{O}_2}$ &$=~~~2.8 \cdot 10^1 s^{-1}$ \\
$\textrm{R}^{\textrm{De}}_{\textrm{CO}}$ &:&    $\textrm{CO}_i$ & $\rightarrow$ & $\varnothing_i$ & , \quad$k^{\textrm{De}}_{\textrm{CO}}$ &$=~~~9.2 \cdot  10^6 s^{-1}$ \\
$\textrm{R}^{\textrm{De}}_{\textrm{CO}_2}$ &:&    $\textrm{CO}_i + \textrm{O}_{j} $ & $\rightarrow$ & $\varnothing_i + \varnothing_{j}$ & , \quad $k^{\textrm{De}}_{\textrm{CO}_2}$ &$=~~~1.7 \cdot 10^5 s^{-1}$ \\
\hline
\hline
\multicolumn{7}{|>{\columncolor[gray]{0.9}}l|}{\textbf{Diffusion}} \\ \hline \hline
$\textrm{R}^{\textrm{Diff}}_{\textrm{O}}$ &:&    $\textrm{O}_i + \varnothing_{j}$ & $\rightarrow$ & $\varnothing_i + \textrm{O}_{j}$ & , \quad $k^{\textrm{Diff}}_{\textrm{O}}$ &$=~~~0.5 s^{-1}$ \\
$\textrm{R}^{\textrm{Diff}}_{\textrm{CO}}$ &:&    $\textrm{CO}_i + \varnothing_{j}$ & $\rightarrow$ & $\varnothing_i + \textrm{CO}_{j}$ & , \quad $k^{\textrm{Diff}}_{\textrm{CO}}$ &$=~~~6.6 \cdot 10^{-2} s^{-1}$ \\
\hline
\end{tabular}
\end{footnotesize}
\caption{Elementary reaction steps on the cus sites together with their corresponding rate constants, see~\cite{REUTER} for details. The reactions are defined on two neighboring sites $\Theta_i$ and $\Theta_{j}$, except for adsorption and desorption of CO, which are defined only on site $\Theta_i$.}
\label{table:reactions}
\end{table}

\noindent In order to construct the operator according to the MME, we use Algorithm \ref{alg: SLIM} with inputs
\begin{align*}
    \mathrm{a}_{i,1} & = \begin{pmatrix} k_{\textrm{CO}}^{\textrm{Ad}} & 0 & 0 \end{pmatrix}^T,  & p_{i,1} & = +2,\\
    \mathrm{a}_{i,2} & = \begin{pmatrix} 0 & 0 &  k_{\textrm{CO}}^{\textrm{De}} \end{pmatrix}^T, & p_{i,2} & = -2,
\end{align*}
and
\newcommand\w[1]{\makebox[0.8cm]{$#1$}}
\begin{align*}
    \mathrm{a}_{i,i+1,1} & = \begin{pmatrix} k_{\textrm{O}_2}^{\textrm{Ad}} & 0 & 0 \\ \w0 & \w0 & \w0 \\ 0 & 0 & 0 \end{pmatrix},  & [ p_{i,i+1,1}, \, q_{i,i+1,1}] & = [+1,\, +1],\\
    \mathrm{a}_{i,i+1,2} & = \begin{pmatrix} \w0 & \w0 & \w0 \\ 0 & k_{\textrm{O}_2}^{\textrm{De}} & 0  \\ 0 & 0 & 0 \end{pmatrix},  & [ p_{i,i+1,2}, \, q_{i,i+1,2}] & = [-1,\, -1],\\
    \mathrm{a}_{i,i+1,3} & = \begin{pmatrix} \w0 & \w0 & \w0 \\ 0 & 0 & 0 \\ 0 & k_{\textrm{CO}_2}^{\textrm{De}} & 0\end{pmatrix},  & [ p_{i,i+1,3}, \, q_{i,i+1,3}] & = [-2,\, -1],\\
    \mathrm{a}_{i,i+1,4} & = \begin{pmatrix} \w0 & \w0 & \w0 \\ 0 & 0 & k_{\textrm{CO}_2}^{\textrm{De}}  \\ 0 & 0 & 0 \end{pmatrix},  & [ p_{i,i+1,4}, \, q_{i,i+1,4}] & = [-1,\, -2],\\
    \mathrm{a}_{i,i+1,5} & = \begin{pmatrix} \w0 & \w0 & \w0 \\ k_{\textrm{O}}^{\textrm{Diff}} & 0 & 0 \\ 0 & 0 & 0 \end{pmatrix},  & [ p_{i,i+1,5}, \, q_{i,i+1,5}] & = [-1,\, +1],\\
    \mathrm{a}_{i,i+1,6} & = \begin{pmatrix} 0 & k_{\textrm{O}}^{\textrm{Diff}} & 0 \\ \w0 & \w0 & \w0 \\  0 & 0 & 0 \end{pmatrix},  & [ p_{i,i+1,6}, \, q_{i,i+1,6}] & = [+1,\, -1],\\
    \mathrm{a}_{i,i+1,7} & = \begin{pmatrix} \w0 & \w0 & \w0 \\  0 & 0 & 0 \\ k_{\textrm{CO}}^{\textrm{Diff}} & 0 & 0 \end{pmatrix},  & [ p_{i,i+1,7}, \, q_{i,i+1,7}] & = [-2,\, +2],\\
    \mathrm{a}_{i,i+1,8} & = \begin{pmatrix} 0 & 0 & k_{\textrm{CO}}^{\textrm{Diff}} \\ \w0 & \w0 & \w0 \\  0 & 0 & 0  \end{pmatrix},  & [ p_{i,i+1,8}, \, q_{i,i+1,8}] & = [+2,\, -2].
\end{align*}
Since we consider a cyclic, homogeneous NNIS here, the inputs above hold for $i=1, \dots , d$, where $\mathrm{a}_{d,d+1,\mu} = \mathrm{a}_{d,1,\mu}$ and $[ p_{d,d+1,\mu}, \, q_{d,d+1,\mu}] = [ p_{d,1,\mu}, \, q_{d,1,\mu}]$. The output of Algorithm \eqref{alg: SLIM} is then a TT operator with TT ranks equal to 16, which is the same size as the operator in \cite{GELSS2016}. Using this exact tensor-train decomposition, we can compute stationary and time dependent probability distributions by formulating eigenvalue problems or applying implicit time propagation schemes combined with ALS. In \cite{GELSS2016}, we carried out several experiments including the analysis of the computational costs for an increasing number of dimensions, computing central quantities describing the efficiency of the catalyst, and a demonstration of the advantage of the TT approach for stiff systems.  

\subsubsection{Toll Station}

As a final example, we examine a quasi-realistic traffic problem. Imagine a toll station with $d$ lanes. Cars form a queue in these lanes arriving according to a given distribution. Each car can then change its lane but may only go from one lane to a neighboring one, depending on a given interaction parameter. We assume that the time to pass the toll station depends on the toll booths. In our example, we choose a normal distribution for the incoming cars and a sum of two normal distributions for the outgoing flux, see Figure~\ref{fig: Toll a}.

The state space is given by
\begin{equation*}
  \mathcal{S} = \{1, \dots, n \} \times \dots \times \{1, \dots , n \},
\end{equation*}
and state $X = (x_1 , \dots, x_d)^T \in \mathcal{S}$ now represents the number of cars in each lane, e.g.~there are $x_i -1$ cars on lane $\Theta_i$. In what follows, we set $d = 20$, $n=10$, and define the functions
\begin{equation*}
  \begin{split}
    f_\mathrm{in}(t) & = \frac{1}{\sqrt{2 \pi \sigma_\mathrm{in}^2}} \cdot e^{-\frac{1}{2} \frac{t^2}{\sigma_\mathrm{in}^2} } + 0.05,\\
    f_\mathrm{out}(t) & = \frac{1}{\sqrt{2 \pi \sigma_\mathrm{out, left }^2}} \cdot e^{-\frac{1}{2} \frac{(t-\nu_\mathrm{out,left})^2}{\sigma_\mathrm{out,left}^2} } + \frac{1}{\sqrt{2 \pi \sigma_\mathrm{out, right }^2}} \cdot e^{-\frac{1}{2} \frac{(t-\nu_\mathrm{out,right})^2}{\sigma_\mathrm{out,right}^2} },
  \end{split}
\end{equation*}
with $\sigma_\mathrm{in}^2 = 2.5$, $\sigma_\mathrm{out,left}^2 = 1$, $\sigma_\mathrm{out,right}^2 = 0.5$, $\nu_\mathrm{out,left} = -1.5$, and $\nu_\mathrm{out,right} = 1.5$. The positions of the lanes are given by $t_i = -2+0.5 (i-1)$, for $i=1, \dots , d$. For the rate to change from lane $\Theta_i$ to lane $\Theta_{i+1}$ and vice versa, we assume a step function on $x_i - x_{i+1}$ which is only unequal to zero if there are fewer cars in the neighboring lane, i.e. 
\begin{equation*}
  f_\mathrm{change}(x_i - x_{i+1}) = 
  \begin{cases}
  5, \quad \textrm{if} \quad x_i - x_{i+1} > 0,\\
  0, \quad \textrm{otherwise}. 
  \end{cases}
\end{equation*}

\begin{figure}[htb]
    \centering
    \includegraphics[width=0.55\textwidth]{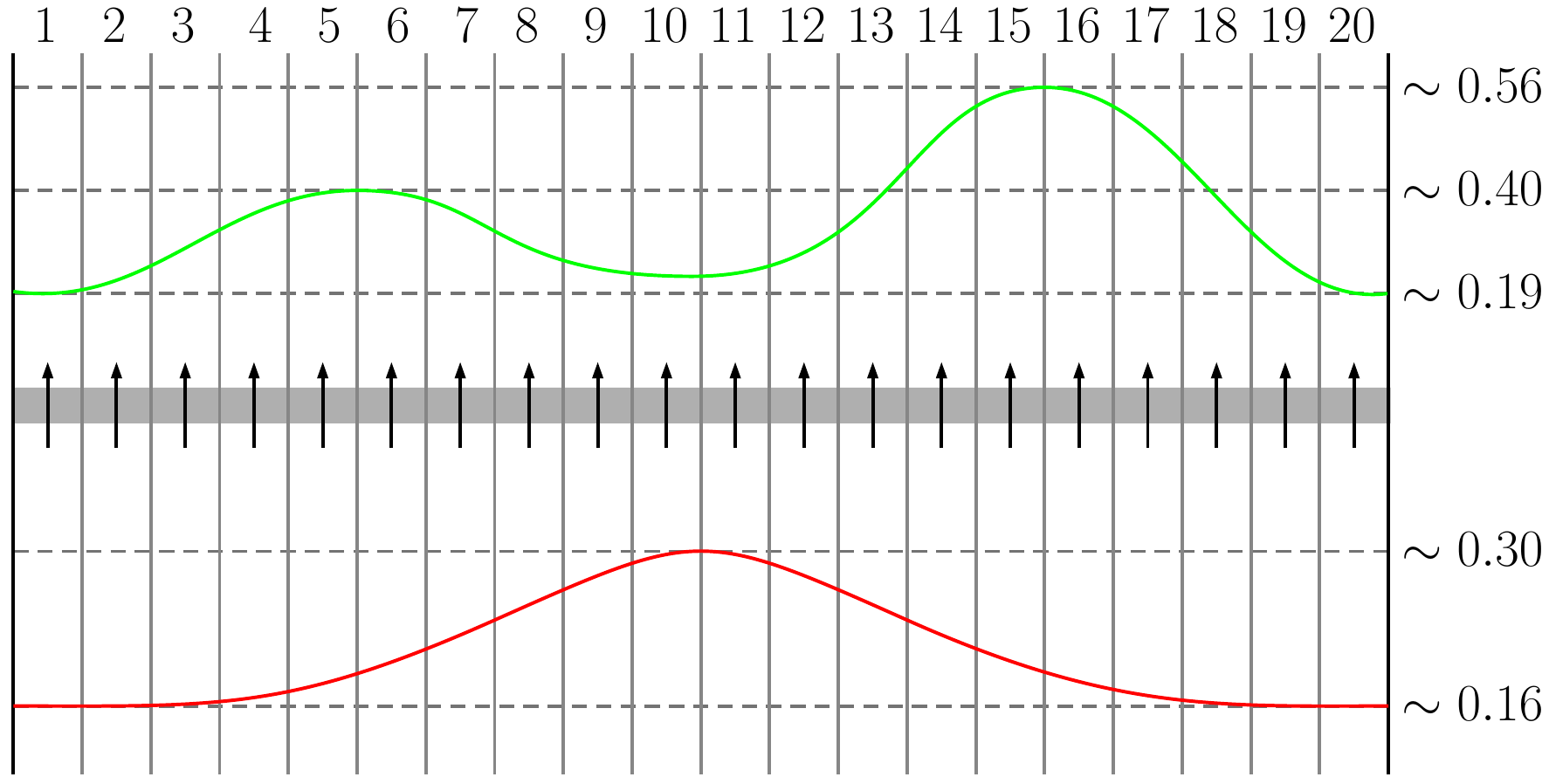}
    \caption{Pictorial representation of the toll station model. The red curve shows the probability distribution of incoming cars, the green curve the probability distribution of processing times.}
    \label{fig: Toll a}
\end{figure}

\noindent Again, we apply Algorithm \ref{alg: SLIM} to construct the generator according to this heterogeneous, non-cyclic NNIS. The inputs are
\begin{align*}
    \mathrm{a}_{i,1} & = f_\mathrm{in}(t_i) \cdot \begin{pmatrix} 1 & 1 & \dots & 1 \end{pmatrix}^T,  & p_{i,1} & = +1,\\
    \mathrm{a}_{i,2} & = f_\mathrm{out}(t_i) \cdot \begin{pmatrix} 0 & 1 & \dots & 1 \end{pmatrix}^T, & p_{i,2} & = -1,
\end{align*}
and matrices $\mathrm{a}_{i,i+1,1}, \mathrm{a}_{i,i+1,2} \in \R^{n \times n}$ with $( \mathrm{a}_{i,i+1,1})_{x_i , x_{i+1}} = f_\mathrm{change}(x_i - x_{i+1})$ and $( \mathrm{a}_{i,i+1,2})_{x_i , x_{i+1}} = f_\mathrm{change}(x_{i+1} - x_{i})$. Furthermore, $ [ p_{i,i+1,1}, \, q_{i,i+1,1}]  = [-1,\, +1]$ and $ [ p_{i,i+1,2}, \, q_{i,i+1,2}]  = [+1,\, -1]$.

In this case, the output of Algorithm~\ref{alg: SLIM} is a tensor-train operator $\mathbf{A} \in \R^{(n \times n) \times \dots \times (n \times n)}$ with ranks equal to 39. We are interested in the transient behavior of the distribution of cars depending on a given initial state. For this purpose, we apply the implicit Euler method
\begin{equation}\label{eq: SLE}
  (\mathbf{I} - \tau \mathbf{A}) \, \mathbf{T}_{k+1} = \mathbf{T}_k,
\end{equation}
with step size $\tau = 10^{-1}$ and set $\mathbf{T}_0 \in \R^{n \times \dots \times n}$ to the singular probability distribution with $\left( \mathbf{T}_0 \right)_{6, \dots, 6} = 1$. The resulting systems of linear equations are solved with ALS where the TT ranks of the solution have been arbitrarily set to 10. The simulation results in Figure~\ref{fig: Toll b} show that the constant distribution at the beginning rapidly changes due to the different input and output rates of the lanes. Also, the length of the queues of cars are decreasing over a short time interval. In order to evaluate the accuracy of the results, we consider the relative errors of the systems of linear equations given in \eqref{eq: SLE}:
\begin{equation*}
\varepsilon_k = \frac{\left\| (\mathbf{I} - \tau \mathbf{A}) \mathbf{T}_{k} -  \mathbf{T}_{k-1} \right\|_F}{\left\| \mathbf{T}_{k-1} \right\|_F},
\end{equation*}
where $\left\|\,.\,\right\|_F$ denotes the Frobenius norm for tensors. The errors for all 300 steps are less than 5 \%.

\begin{figure}[htb]
    \centering
    \includegraphics[width=0.5\textwidth]{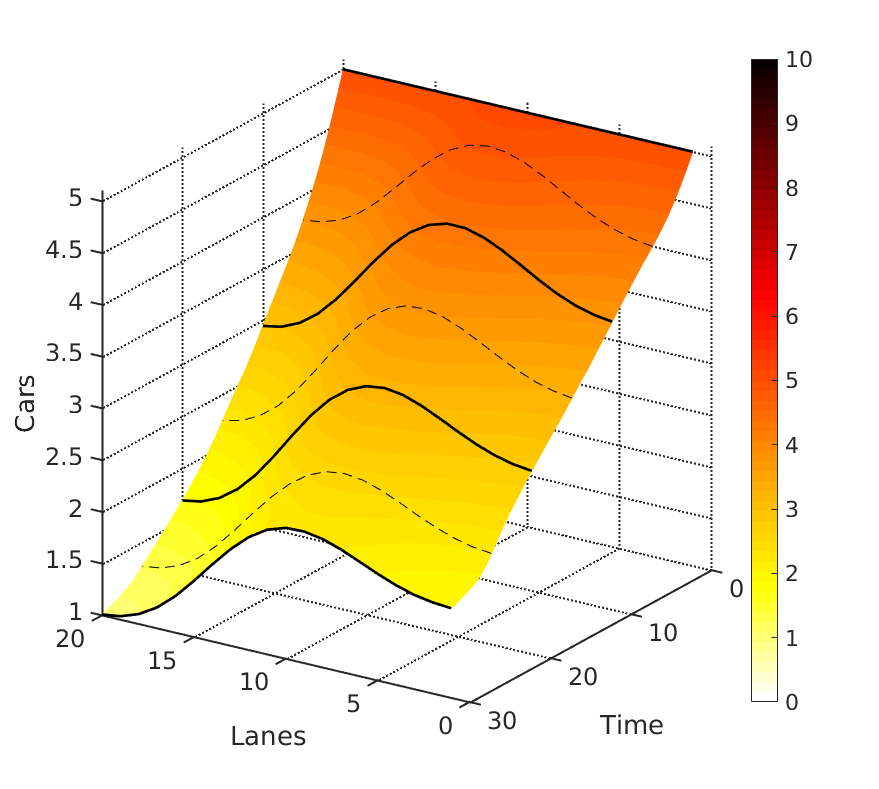}
    \caption{Simulation results for the toll station model.}
    \label{fig: Toll b}
\end{figure}

\section{Conclusion and Outlook}
\label{sec: concl}

In this paper, we proposed an approach to construct TT decompositions of potentially high-dimensional nearest-neighbor interaction systems. The aim is to reduce the memory consumption as well as the computational costs significantly and to mitigate the curse of dimensionality. First, we have shown how to apply the SLIM decomposition to general NNISs and then we gave a detailed description of TT decompositions of high-dimensional Markovian master equations. Additionally, we presented algorithms which can be used to construct SLIM decompositions of Markovian generators automatically. The results, which were illustrated with several examples from different application areas such as quantum physics and heterogeneous catalysis, show that by exploiting the coupling structure of a system it is possible to compute low-rank tensor decompositions of probability distributions and associated linear operators. We also considered homogeneous systems where the ranks of the SLIM decomposition do not depend on the network size, resulting in a linear growth of the storage consumption.

Future research will include the consideration of nearest-neighbor interaction systems from other scientific areas and the examination of more general interaction systems, especially the generalization of the SLIM decomposition to next-nearest-neighbor interaction systems or other systems with a certain coupling structure.

\section*{Acknowledgments}
This research has been funded by the Berlin Mathematical School, the Einstein Center for Mathematics, Deutsche Forschungsgemeinschaft through grant CRC 1114, and \textsc{Matheon}, which is supported by the Einstein Foundation Berlin.

\bibliographystyle{unsrt}
\bibliography{SLIM}

\pagebreak

\appendix

\section{Tensor Notation and Properties}
\label{app: Tensor Notation and Properties}

\subsection{Bilinearity of the Tensor Product}

The tensor product is a bilinear map, i.e.\ the following properties hold:
\begin{equation*}
\begin{alignedat}{2}
    & \text{(i)}   & \quad & \left( \mathbf{T}_1 + \mathbf{T}_2 \right) \otimes \mathbf{U} = \mathbf{T}_1 \otimes \mathbf{U} + \mathbf{T}_2 \otimes \mathbf{U}, \\
    & \text{(ii)}  & \quad & \mathbf{T} \otimes \left( \mathbf{U}_1 + \mathbf{U}_2 \right) = \mathbf{T} \otimes \mathbf{U}_1 + \mathbf{T} \otimes \mathbf{U}_2, \\
    & \text{(iii)} & \quad & \left( \lambda \cdot \mathbf{T} \right) \otimes \mathbf{U} = \mathbf{T} \otimes \left( \lambda \cdot \mathbf{U}_1\right) = \lambda \cdot \left( \mathbf{T} \otimes \mathbf{U} \right),
\end{alignedat}
\end{equation*}
for $\mathbf{T}, \mathbf{T}_1, \mathbf{T}_2 \in \mathbb{R}^{m_1 \times \ldots \times m_d}$, $\mathbf{U}, \mathbf{U}_1, \mathbf{U}_2 \in \R^{n_1 \times \ldots \times n_e}$, and $\lambda \in \R$.

\subsection{Rank-Transposed of a TT Core}
Given TT cores $\mathbf{T}^{(i)} \in \R^{r_{i-1} \times n_i \times r_i}$ and $\mathbf{A}^{(i)} \in \R^{s_{i-1} \times n_i \times n_i \times s_i}$ with 
\begin{equation*}
    \left[ \mathbf{T}^{(i)} \right] =
    \begin{bmatrix}
        & \mathbf{T}^{(i)}_{1,:,1} & \cdots & \mathbf{T}^{(i)}_{1,:,r_{i}} & \\
        & & & & \\
        & \vdots & \ddots & \vdots & \\
        & & & & \\
        & \mathbf{T}^{(i)}_{r_{i-1},:,1} & \cdots & \mathbf{T}^{(i)}_{r_{i-1},:,r_i} &
    \end{bmatrix},
    \quad \text{and} \quad
    \left[ \mathbf{A}^{(i)} \right] =
    \begin{bmatrix}
        & \mathbf{A}^{(i)}_{1,:,:,1} & \cdots & \mathbf{A}^{(i)}_{1,:,:,s_{i}} & \\
        & & & & \\
        & \vdots & \ddots & \vdots & \\
        & & & & \\
        & \mathbf{A}^{(i)}_{s_{i-1},:,:,1} & \cdots & \mathbf{A}^{(i)}_{s_{i-1},:,:,s_i} &
    \end{bmatrix},
\end{equation*}
we define the rank-transposed cores $\left[ \mathbf{T}^{(i)} \right]^\mathbb{T}$ and $\left[ \mathbf{A}^{(i)} \right]^\mathbb{T}$ as
\begin{equation*} 
    \left[ \mathbf{T}^{(i)} \right]^\mathbb{T} =
    \begin{bmatrix}
        & \mathbf{T}^{(i)}_{1,:,1} & \cdots & \mathbf{T}^{(i)}_{r_{i-1},:,:,1} & \\
        & & & & \\
        & \vdots & \ddots & \vdots & \\
        & & & & \\
        & \mathbf{T}^{(i)}_{1,:,:,r_i} & \cdots & \mathbf{T}^{(i)}_{r_{i-1},:,:,r_i} &
    \end{bmatrix},
    \quad \text{and} \quad
    \left[ \mathbf{A}^{(i)} \right]^\mathbb{T} =
    \begin{bmatrix}
        & \mathbf{A}^{(i)}_{1,:,:,1} & \cdots & \mathbf{A}^{(i)}_{s_{i-1},:,:,1} & \\
        & & & & \\
        & \vdots & \ddots & \vdots & \\
        & & & & \\
        & \mathbf{A}^{(i)}_{1,:,:,s_i} & \cdots & \mathbf{A}^{(i)}_{s_{i-1},:,:,s_i} &
    \end{bmatrix}.
\end{equation*}
Note that the vectors/matrices within the cores are not tranposed, only the outer indices of each element are interchanged.

\subsection{Equivalence of the Master Equation Formulations}

\begin{theorem*}
For any state $X = (x_1, \dots, x_d)^T \in \mathcal{S}$, it holds that
\begin{equation*}
    \left( \pd{}{t} \mathbf{P}(t) \right)_{x_1, \dots, x_d } = \pd{}{t} P(X,t).
\end{equation*}
\end{theorem*}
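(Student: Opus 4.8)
The plan is to prove the identity entrywise by unfolding the operator-times-tensor product on the right-hand side of the tensor master equation \eqref{cme_tensor_notation} and matching it, term by term, against the right-hand side of the scalar equation \eqref{MME as CME}. Throughout I abbreviate a row multi-index $(x_1,\dots,x_d)$ by $X$ and a column multi-index $(y_1,\dots,y_d)$ by $Y$, and I use that the action of a TT operator on a tensor contracts the second (column) indices,
\begin{equation*}
    \left( \mathbf{A} \cdot \mathbf{P}(t) \right)_{X} = \sum_{Y} \mathbf{A}_{x_1,y_1,\dots,x_d,y_d} \, P(Y,t),
\end{equation*}
where $(\mathbf{P}(t))_X = P(X,t)$ by definition. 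Since, under the tensor dynamics \eqref{cme_tensor_notation}, the left-hand side of the claimed identity is exactly $(\mathbf{A}\cdot\mathbf{P}(t))_X$ with $\mathbf{A}$ as in \eqref{equation:TTME:operator}, it suffices to show that this entry reproduces the summand structure of \eqref{MME as CME}.

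First I would read off the entries of the two building blocks. For the diagonal part, a tensor product of diagonal matrices is again diagonal, and evaluating the canonical representation of $\diag(\mathbf{a}_\mu)$ at $(X,Y)$ factors out $\prod_{i=1}^d \delta_{x_i,y_i}$ while the sum over the canonical index $k$ collapses the product of cores back into the propensity tensor; hence
\begin{equation*}
    \left( \diag(\mathbf{a}_\mu) \right)_{x_1,y_1,\dots,x_d,y_d} = a_\mu(X) \prod_{i=1}^{d} \delta_{x_i,y_i}.
\end{equation*}
For the shift part, Definition~\ref{def: shift operators} gives $(G_i(k))_{x,y}=\delta_{y-x,k}$, so the tensor product yields $(\mathbf{G}_\mu)_{x_1,y_1,\dots,x_d,y_d}=\prod_{i=1}^d \delta_{y_i-x_i,\,-\xi_\mu(i)}$, which equals $1$ exactly when $Y=X-\xi_\mu$ and vanishes otherwise, while $\mathbf{I}=\mathbf{G}_0$ contributes $\prod_{i=1}^d\delta_{x_i,y_i}$.

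Next I would compose the two operators and apply the result to $\mathbf{P}(t)$. Because $\diag(\mathbf{a}_\mu)$ is diagonal, the intermediate summation index in the product $(\mathbf{G}_\mu-\mathbf{I})\cdot\diag(\mathbf{a}_\mu)$ is forced to coincide with $Y$, so its $(X,Y)$-entry is simply $(\mathbf{G}_\mu-\mathbf{I})_{x_1,y_1,\dots,x_d,y_d}\,a_\mu(Y)$. Carrying out the contraction over $Y$, the two Kronecker-delta patterns each select a single term: the $\mathbf{G}_\mu$ part evaluates the summand at $Y=X-\xi_\mu$, producing $a_\mu(X-\xi_\mu)\,P(X-\xi_\mu,t)$, and the $\mathbf{I}$ part evaluates at $Y=X$, producing $-a_\mu(X)\,P(X,t)$. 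Summing over $\mu=1,\dots,\mathcal{M}$ then reproduces \eqref{MME as CME} verbatim, which establishes the claim.

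I expect the main obstacle to be purely bookkeeping rather than conceptual: one must keep the sign convention of the shift straight, noting that the definition uses $G_i(-\xi_\mu(i))$ so that $\mathbf{G}_\mu$ maps the argument $X$ to $X-\xi_\mu$, precisely matching the shifted argument in \eqref{MME as CME}; and one must justify carefully that the tensor product of the diagonal canonical cores recombines into $\diag$ of the full propensity tensor, i.e.\ that $\sum_k\prod_{i}(\mathbf{a}_\mu^{(i)})_{k,x_i}=a_\mu(X)$, rather than treating this as a formal identity.
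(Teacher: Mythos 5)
Your proof is correct and takes essentially the same route as the paper's: both unfold the entry $\left( \mathbf{A} \cdot \mathbf{P}(t) \right)_{x_1,\dots,x_d}$ as a contraction, exploit the Kronecker-delta structure of the shift operators together with the diagonality of $\diag(\mathbf{a}_\mu)$, and collapse the sums to exactly the two terms $a_\mu(X-\xi_\mu)P(X-\xi_\mu,t) - a_\mu(X)P(X,t)$ of \eqref{MME as CME}. The only cosmetic differences are the order of contraction (you eliminate the intermediate index via diagonality first, the paper pushes the shift onto the row index of $\diag(\mathbf{a}_\mu)$ first) and that the paper makes the zero-padding convention for shifted states outside $\mathcal{S}$ explicit, which your sum over $Y \in \mathcal{S}$ handles implicitly.
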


\begin{proof}
Following the definition of the tensor multiplication, see e.g.\ \cite{HACKBUSCH2012}, we can write
\begin{equation*}
  \begin{split}
    \left ( \pd{}{t} \mathbf{P}(t) \right)_{x_1, \dots, x_d }  & = \left ( \left ( \sum_{\mu=1}^{M} (\mathbf{G}_\mu - \mathbf{I} ) \cdot \diag(\mathbf{a}_\mu) \right ) \cdot \mathbf{P}(t) \right)_{x_1, \dots, x_d }\\
    &= \sum_{\mu=1}^M \sum_{i_1=1}^{n_1} \dots \sum_{i_d=1}^{n_d} \left( (\mathbf{G}_\mu - \mathbf{I}) \cdot \diag (\mathbf{a}_\mu)\right)_{x_1, i_1, \dots, x_d,i_d} \cdot \left(\mathbf{P}(t) \right)_{i_1, \dots, i_d}.
  \end{split}
\end{equation*}
Furthermore, it holds that
\begin{equation*}
\begin{split}
     \left( (\mathbf{G}_\mu - \mathbf{I}) \cdot \diag (\mathbf{a}_\mu)\right)_{x_1, i_1, \dots, x_d,i_d} & =  \sum_{j_1=1}^{n_1} \dots \sum_{j_d=1}^{n_d} \left( \mathbf{G}_\mu \right)_{x_1, j_1, \dots, x_d, j_d} \cdot \left( \diag(\mathbf{a}_\mu) \right)_{j_1, i_1, \dots, j_d, i_d} \\
    & \quad -\sum_{j_1=1}^{n_1} \dots \sum_{j_d=1}^{n_d} \left( \mathbf{I} \right)_{x_1, j_1, \dots, x_d,j_d} \cdot \left( \diag(\mathbf{a}_\mu) \right)_{j_1, i_1, \dots, j_d, i_d}.
\end{split}
\end{equation*}
Considering Definition \ref{def: shift operators} of the shift operators, this results in
\begin{equation*}
    \left( \diag(\mathbf{a}_\mu) \right)_{x_1 - \xi_{\mu}(1), i_1, \dots, x_d - \xi_{\mu}(d) , i_d} - \left( \diag(\mathbf{a}_\mu) \right)_{x_1, i_1, \dots, x_d, i_d}.
\end{equation*}
Just as $a_\mu (X) $ and $P(X,t)$ are set to zero if $X \notin \mathcal{S}$, we set 
\begin{equation*}
\left( \diag(\mathbf{a}_\mu) \right)_{x_1 - \xi_{\mu}(1) , i_1, \dots, x_d - \xi_{\mu}(d) , i_d} =0,
\end{equation*}
if $x_k - \xi_{\mu}(k)  \notin \{1, \dots , n_i \}$ for a $k \in \{1, \dots, d\}$. Analogously, we do the same for $\left( \mathbf{P} (t) \right)_{x_1 - \xi_{\mu}(1), \dots, x_d - \xi_{\mu}(d) }$. Due to the construction of $\diag(\mathbf{a}_\mu)$, we finally obtain
\begin{equation*}
    \left( \pd{}{t} \mathbf{P}(t) \right)_{x_1, \dots, x_d}
        = \sum_{\mu=1}^M a_\mu(X-\xi_\mu) P(X-\xi_\mu,t) - a_\mu(X) P(X,t)
        = \pd{}{t} P(X,t). \qedhere
\end{equation*}
\end{proof}

\subsection{Little-Endian Convention}

Consider the state space $\mathcal{N} = \{1, \dots, n_1\} \times \{1 , \dots, n_2\} \times \dots \times \{1, \dots, n_d\}$. The multi-index 
\begin{equation*}
  \overline{x_1, \dots, x_d} := \phi_\mathcal{N}(x_1, \dots, x_d),
\end{equation*}
for $X=(x_1, \dots, x_d)^T \in \mathcal{N}$, is defined by a bijection $\phi_\mathcal{N}$ with
\begin{equation*}
  \phi_\mathcal{N} : \mathcal{N} \rightarrow \{1, \ldots, \prod_{i=1}^{d} n_i\}, \qquad X \mapsto \phi_\mathcal{N}(X).
\end{equation*}
Using the \emph{little-endian} convention, this bijection is given by
\begin{equation*}
    \phi(x_1, \ldots, x_d) = 1+ (x_1 -1) + \ldots + (x_d -1) \cdot n_1 \cdot \ldots \cdot n_{d-1} = 1 + \sum_{i=1}^{d} (x_i -1) \prod_{j=1}^{i-1} n_j.
\end{equation*}

\section{Properties of the SLIM Decomposition}
\label{app: Properties of the SLIM Decomposition}

\subsection{Equivalence of SLIM  and Canonical Decomposition}

\begin{theorem}
 The SLIM decomposition given in \eqref{eq: SLIMTT cyclic, heterogeneous} corresponds to the canonical decomposition given in \eqref{eq: SLIMTT op 2}.
\end{theorem}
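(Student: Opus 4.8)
The plan is to evaluate the generalized matrix product in \eqref{eq: SLIMTT cyclic, heterogeneous} explicitly and to check that it reproduces the canonical sum \eqref{eq: SLIMTT op 2}. Recall that in the core notation of \eqref{eq: core notation - single core - operator} the contraction of two supercores behaves exactly like matrix multiplication, except that the product of two entries is replaced by their tensor product $\otimes$ and the summation runs over the shared rank index. The first step is therefore bookkeeping: each symbol appearing in \eqref{eq: SLIMTT cyclic, heterogeneous} is itself a block, with $\mathbf{S}_i$ and $\mathbf{I}_i$ occupying a single rank slot, $\mathbf{L}_i$ and $\mathbf{M}_i$ occupying $\beta_i$ (resp.\ $\beta_{i-1}$) slots, and $\mathbf{J}_i = \diag(I_i, \dots, I_i)$ occupying a $\beta_d \times \beta_d$ block. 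Checking that the row- and column-dimensions of consecutive cores match confirms that the product is well defined and that the intermediate rank is $r_i = 2 + \beta_i + \beta_d$.

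The cleanest rigorous route is downward induction on the partial contractions from the right. I would set $\mathbf{B}_d = (\mathbf{I}_d, \mathbf{M}_d, \mathbf{S}_d, \mathbf{L}_d)^{\mathbb{T}}$ and $\mathbf{B}_k = [\text{core }k] \otimes \mathbf{B}_{k+1}$, and prove that $\mathbf{B}_k$ is the four-block column whose components are, respectively, $I_k \otimes \dots \otimes I_d$; the bridge block $[\mathbf{M}_k] \otimes I_{k+1} \otimes \dots \otimes I_d$; the tail $\mathbf{T}_k = \sum_{j=k}^{d} I_k \otimes \dots \otimes \mathbf{S}_j \otimes \dots \otimes I_d + \sum_{j=k}^{d-1} I_k \otimes \dots \otimes [\mathbf{L}_j] \otimes [\mathbf{M}_{j+1}] \otimes \dots \otimes I_d$; and the cyclic block $[\mathbf{J}_k] \otimes \dots \otimes [\mathbf{J}_{d-1}] \otimes [\mathbf{L}_d]$. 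The base case $k = d$ is immediate. For the step, multiplying the middle core against $\mathbf{B}_{k+1}$, the decisive line is the third block row $(\mathbf{S}_{k}, \mathbf{L}_{k}, \mathbf{I}_{k}, 0)$, which yields $\mathbf{S}_{k} \otimes I_{k+1} \otimes \dots \otimes I_d$, the new pair $[\mathbf{L}_k] \otimes [\mathbf{M}_{k+1}] \otimes I_{k+2} \otimes \dots \otimes I_d$, and $I_{k} \otimes \mathbf{T}_{k+1}$; their sum is exactly $\mathbf{T}_{k}$, while the other three rows reproduce the remaining components verbatim.

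Contracting the first core $(\mathbf{S}_1, \mathbf{L}_1, \mathbf{I}_1, \mathbf{M}_1)$ against $\mathbf{B}_2$ then selects one term from each block: $\mathbf{S}_1 \otimes I_2 \otimes \dots \otimes I_d$, the pair $[\mathbf{L}_1] \otimes [\mathbf{M}_2] \otimes I_3 \otimes \dots \otimes I_d$, the tail $I_1 \otimes \mathbf{T}_2$, and the cyclic term $[\mathbf{M}_1] \otimes [\mathbf{J}_2] \otimes \dots \otimes [\mathbf{J}_{d-1}] \otimes [\mathbf{L}_d]$. Absorbing the $j=1$ contributions of the first two summands into $I_1 \otimes \mathbf{T}_2$ yields precisely the three lines of \eqref{eq: SLIMTT op 2}, which completes the argument.

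The one genuinely delicate point is the internal rank bookkeeping of the $\mathbf{L}$, $\mathbf{M}$ and $\mathbf{J}$ blocks. When the $\beta_k$-dimensional bridge index is summed out, the contraction $[\mathbf{L}_k] \otimes [\mathbf{M}_{k+1}]$ must collapse to $\sum_{\mu=1}^{\beta_k} L_{k,\mu} \otimes M_{k+1,\mu}$, and the cyclic channel must thread the $\beta_d$-dimensional index unchanged through every $\mathbf{J}_i = \diag(I_i, \dots, I_i)$ so that exactly the single cyclic term of \eqref{eq: SLIMTT op 2} survives. I expect verifying this index-level consistency, rather than the path structure itself, to be the main obstacle; once the block sizes are tracked correctly, the induction is routine.
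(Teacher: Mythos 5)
Your proof is correct and takes essentially the same approach as the paper's: both evaluate the generalized matrix product of the supercores step by step and track the four-channel block structure (identity channel, bridge block, accumulated tail of single- and two-cell terms, and cyclic $\mathbf{J}$-channel) of the partial contractions. The only difference is cosmetic --- the paper contracts left-to-right ``successively'' while you contract right-to-left with an explicitly stated induction invariant, which if anything makes the bookkeeping more transparent; your worry about the internal $\beta$-index consistency is already absorbed into the statement of \eqref{eq: SLIMTT op 2}, since that equation is itself written in terms of the core products $[\mathbf{L}_i]\otimes[\mathbf{M}_{i+1}]$ and $[\mathbf{M}_1]\otimes[\mathbf{J}_2]\otimes\dots\otimes[\mathbf{J}_{d-1}]\otimes[\mathbf{L}_d]$.
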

\begin{proof}
 Consider the first two TT cores of the SLIM decomposition, given by
 \begin{equation*}
  \begin{bmatrix}
        \mathbf{S}_1 & \mathbf{L}_1 & \mathbf{I}_1 & \mathbf{M}_1 
    \end{bmatrix}
    \otimes 
    \begin{bmatrix}
        \mathbf{I}_2 & 0            & 0            & 0            \\
        \mathbf{M}_2 & 0            & 0            & 0            \\       
        \mathbf{S}_2 & \mathbf{L}_2 & \mathbf{I}_2 & 0            \\
        0            & 0            & 0            & \mathbf{J}_2
    \end{bmatrix} =
    \begin{bmatrix}
        \mathbf{S}_1 \otimes \mathbf{I}_2 + \mathbf{I}_1 \otimes \mathbf{S}_2 + [\mathbf{L}_1 ] \otimes [\mathbf{M}_2]  & & \mathbf{I}_1 \otimes [\mathbf{L}_2] & & \mathbf{I}_1 \otimes \mathbf{I}_2 & & [\mathbf{M}_1] \otimes [\mathbf{J}_2] 
    \end{bmatrix} .
 \end{equation*}
  Successively, we obtain
  \begin{equation*}
  \begin{split}
    \mathbf{A} &= 
    \begin{bmatrix}
        \mathbf{S}_1 \otimes \mathbf{I}_2 \otimes \dots \otimes \mathbf{I}_{d-1} + \dots \\
				\dots + \mathbf{I}_1 \otimes \dots \otimes \mathbf{I}_{d-2} \otimes  \mathbf{S}_{d-1} \\+ [\mathbf{L}_1 ] \otimes [\mathbf{M}_2] \otimes \mathbf{I}_3 \otimes \dots \otimes \mathbf{I}_{d-1} + \dots \\
				\dots + \mathbf{I}_1 \otimes \dots \otimes \mathbf{I}_{d-3} \otimes [\mathbf{L}_{d-2} ] \otimes [\mathbf{M}_{d-1}] \\[0.25cm] \mathbf{I}_1 \otimes \dots \otimes \mathbf{I}_{d-2} \otimes [\mathbf{L}_{d-1}] \\[0.25cm] \mathbf{I}_1 \otimes \dots \otimes \mathbf{I}_{d-1} \\[0.25cm] [\mathbf{M}_1] \otimes [\mathbf{J}_2] \otimes \dots \otimes [\mathbf{J}_{d-1}]
    \end{bmatrix}^\mathbb{T}  \otimes 
    \begin{bmatrix}
        \mathbf{I}_{d} \\
        \mathbf{M}_{d} \\       
        \mathbf{S}_{d} \\
        \mathbf{L}_{d}
    \end{bmatrix}\\
    & = \mathbf{S}_1 \otimes \mathbf{I}_2 \otimes \dots \otimes \mathbf{I}_{d} + \dots + \mathbf{I}_1 \otimes \dots \otimes \mathbf{I}_{d-1} \otimes  \mathbf{S}_{d} \\
        & \quad + [\mathbf{L}_1 ] \otimes [\mathbf{M}_2] \otimes \mathbf{I}_3 \otimes \dots \otimes \mathbf{I}_{d} + \dots + \mathbf{I}_1 \otimes \dots \otimes \mathbf{I}_{d-2} \otimes [\mathbf{L}_{d-1} ] \otimes [\mathbf{M}_{d}]\\
        & \quad + \left[ \mathbf{M}_1\right] \otimes \left[ \mathbf{J}_2\right] \otimes \dots \otimes \left[ \mathbf{J}_{d-1} \right] \otimes \left[ \mathbf{L}_{d} \right], 
  \end{split}
  \end{equation*}
  which is exactly the same expression as \eqref{eq: SLIMTT op 2}.
\end{proof}

\subsection{Storage Consumption of the SLIM Decomposition}

\begin{theorem}
  The storage consumption in the sparse format of the SLIM decomposition for cyclic, heterogeneous NNISs as given in \eqref{eq: SLIMTT cyclic, heterogeneous} is
  \begin{equation*}
    O \left( \sum_{i=1}^d (\beta_{i-1} + \beta_i +1) n_i^2 + \sum_{i=2}^{d-1} (\beta_d +2)  n_i + n_1 + n_d \right),
  \end{equation*}
  with $\beta_0 = \beta_d$.
\end{theorem}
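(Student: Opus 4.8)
The plan is to read off the storage cost core by core from the SLIM decomposition \eqref{eq: SLIMTT cyclic, heterogeneous} and then sum the contributions, exploiting that the identity blocks $\mathbf{I}_i$ and the block-diagonal cores $\mathbf{J}_i$ are stored sparsely. First I would fix the bookkeeping conventions for the individual building blocks. A single matrix $\mathbf{S}_i \in \R^{n_i \times n_i}$ is in general full and therefore requires $O(n_i^2)$ entries. Each supercore $\mathbf{L}_i$ collects $\beta_i$ matrices of size $n_i \times n_i$ and hence costs $O(\beta_i n_i^2)$, while $\mathbf{M}_i$ collects $\beta_{i-1}$ matrices of size $n_i \times n_i$ (with the cyclic convention $\beta_0 = \beta_d$, so that $\mathbf{M}_1$ holds $\beta_d$ matrices) and costs $O(\beta_{i-1} n_i^2)$. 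By contrast, $\mathbf{I}_i = I_i$ has only $n_i$ nonzero diagonal entries, and $\mathbf{J}_i$, being block-diagonal with $\beta_d$ copies of $I_i$, has only $\beta_d n_i$ nonzeros; in the sparse format these contribute $O(n_i)$ and $O(\beta_d n_i)$, respectively.

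Next I would add up the nonzero blocks appearing in each TT core. The first (row) core $[\,\mathbf{S}_1\ \mathbf{L}_1\ \mathbf{I}_1\ \mathbf{M}_1\,]$ contributes $(1 + \beta_1 + \beta_d) n_1^2 + n_1$. Each interior core ($2 \le i \le d-1$) contains $\mathbf{S}_i$, $\mathbf{L}_i$, $\mathbf{M}_i$, two identity blocks $\mathbf{I}_i$, and one $\mathbf{J}_i$, giving $(\beta_{i-1} + \beta_i + 1) n_i^2 + (\beta_d + 2) n_i$. The last (column) core $[\,\mathbf{I}_d\ \mathbf{M}_d\ \mathbf{S}_d\ \mathbf{L}_d\,]^{\mathbb{T}}$ contributes $(\beta_{d-1} + \beta_d + 1) n_d^2 + n_d$.

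Finally I would collect these three groups of terms. Writing $\beta_0 = \beta_d$, the first and last cores fit the common pattern $(\beta_{i-1}+\beta_i+1)n_i^2$ for $i=1$ and $i=d$, so that all quadratic contributions combine into $\sum_{i=1}^{d}(\beta_{i-1}+\beta_i+1)n_i^2$. The sparse identity and block-diagonal contributions give $n_1$ from the first core, $n_d$ from the last, and $(\beta_d+2)n_i$ from each interior core, i.e.\ $n_1 + n_d + \sum_{i=2}^{d-1}(\beta_d+2)n_i$. Adding the two groups yields the stated bound.

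There is no genuine analytic difficulty here; the work is entirely bookkeeping. The one step that requires care---and the place where a naive count would be off---is recognizing that $\mathbf{I}_i$ and especially $\mathbf{J}_i$ must be counted in sparse format, contributing only $O(n_i)$ and $O(\beta_d n_i)$ rather than $O(n_i^2)$ and $O(\beta_d^2 n_i^2)$. This must be combined with consistently tracking which supercore ($\mathbf{L}$ versus $\mathbf{M}$) carries the index $\beta_i$ versus $\beta_{i-1}$, and with applying the cyclic convention $\beta_0=\beta_d$ to absorb the wrap-around terms of the first and last cores into the uniform quadratic sum.
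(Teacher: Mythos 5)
Your proposal is correct and follows essentially the same route as the paper: the same core-by-core count with dense $\mathbf{S}_i$, $\mathbf{L}_i$, $\mathbf{M}_i$ blocks costing $O(n_i^2)$ per matrix, sparse storage of $\mathbf{I}_i$ and $\mathbf{J}_i$ costing $O(n_i)$ and $O(\beta_d n_i)$, and the cyclic convention $\beta_0 = \beta_d$ to unify the first and last cores with the interior ones. The per-core totals you obtain match the paper's exactly, and summation over the cores gives the claimed bound.
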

\begin{proof}
  We assume the matrices of the core elements $\mathbf{S}_i$, $\mathbf{L}_i$, and $\mathbf{M}_i$, $i = 1, \dots, d$, to be dense, i.e.~the storage consumption of a single matrix is then estimated as $O(n_i^2)$. For the components $\mathbf{I}_i$, we obtain $O(n_i)$ since $\mathbf{I}_i = I \in \R^{n_i \times n_i}$ has only $n_i$ entries. Furthermore, we can analogously estimate the storage of $\mathbf{J}_i$ as $O(\beta_d \cdot n_i)$. Thus, we have the following storage estimates for the different TT cores.
  \begin{equation*}
    \begin{array}{lcl}
      \mathbf{A}^{(1)} & : & O\left((\beta_d + \beta_1 +1)n_1^2 + n_1 \right), \\
      \mathbf{A}^{(i)}, 2 \leq i \leq d-1 & : & O\left((\beta_{i-1} + \beta_i +1)n_i^2 + (2+\beta_d)n_i \right), \\
      \mathbf{A}^{(d)} & : & O\left((\beta_{d-1} + \beta_d +1)n_d^2 + n_d \right).
    \end{array}
  \end{equation*}
  Summation over all cores concludes the proof.
\end{proof}

\end{document}